\numberwithin{equation}{section}
\newtheorem{thm}{Theorem}[section]
\newtheorem{lem}[thm]{Lemma}
\newtheorem{prop}[thm]{Proposition}
\theoremstyle{remark}
\newtheorem*{definition}{Definition}
\newtheorem*{note}{Note}
\newcommand{\bsk}{\par\vspace{\baselineskip}}
\newcommand{\Z}{\mathbb{Z}}
\newcommand{\noi}{\noindent}
\newcommand{\ord}{\mathrm{ord}}
\newcommand{\C}{\mathbb{C}}
\newcommand{\Q}{\mathbb{Q}}
\newcommand{\F}{\mathbb{F}}
\newcommand{\G}{\Gamma}
\renewcommand{\H}{\mathbb{H}}
\newcommand{\X}{X_0^+(p)}
\renewcommand{\mod}[1]{\,(\mathrm{mod}\,{#1})}
\newcommand{\inv}{^{-1}}
\newcommand{\wt}{\mathrm{wt}}
\newcommand{\leg}[2]{\left(\frac{#1}{#2}\right)}
\begin{document}

\title{Weierstrass points on $X_0^+(p)$ and supersingular $j$-invariants}

\author{Stephanie Treneer}

\address{Department of Mathematics\\
Western Washington University\\
Bellingham, WA 98225}

\email{stephanie.treneer@wwu.edu}

\noi
\begin{abstract} We study the arithmetic properties of Weierstrass points on the modular curves $X_0^+(p)$ for primes $p$.  In particular, we obtain a relationship between the Weierstrass points on $X_0^+(p)$ and the $j$-invariants of supersingular elliptic curves in characteristic $p$.
\end{abstract}

\keywords{Weierstrass points, modular curves}

\maketitle

\section{Introduction}

A \emph{Weierstrass point} on a compact Riemann surface $M$ of genus $g$ is a point $Q\in M$ at which some holomorphic differential $\omega$ vanishes to order at least $g$. Weierstrass points can be identified by observing their weight.  Let $\mathcal{H}^1(M)$ be the $\C$-vector space of holomorphic differentials on $M$ of dimension $g$. If $\{\omega_1,\omega_2, \dots, \omega_g\}$ forms a basis for $\mathcal{H}^1(M)$ adapted to $Q\in M$, so that
\[0=\mbox{ord}_Q(\omega_1)<\mbox{ord}_Q(\omega_2)<\cdots<\mbox{ord}_Q(\omega_g),\]
then we define the \emph{Weierstrass weight} of $Q$ to be
\[\wt(Q):=\sum_{j=1}^g(\mbox{ord}_Q(\omega_j)-j+1).\]
We see that $\wt(Q)>0$ if and only if $Q$ is a Weierstrass point of $M$.  The Weierstrass weight is independent of the choice of basis, and it is known that
\[\sum_{Q\in M}\wt(Q)=g^3-g.\]
Hence each Riemann surface of genus $g\geq 2$ must have Weierstrass points. For these and other facts, see Section III.5 of \cite{FK}.

We will consider Weierstrass points on modular curves, a class of Riemann surfaces which are of wide interest in number theory.  Let $\H$ denote the complex upper half-plane.  The modular group $\G:=\mathrm{SL}_2(\Z)$ acts on $\H$ by linear fractional transformations $\left(\begin{smallmatrix}a&b\\c&d\end{smallmatrix}\right)z=\frac{az+b}{cz+d}$.  If $N\geq 1$ is an integer, then we define the congruence subgroup
\[\G_0(N):=\left\{\left(\begin{array}{cc}a&b\\c&d\end{array}\right)\in\G:c\equiv 0\pmod{N}\right\}.\]
The quotient of the action of $\G_0(N)$ on $\H$ is the Riemann surface $Y_0(N):=\G_0(N)\backslash\H$, and its compactification is $X_0(N)$.  The modular curve $X_0(N)$ can be viewed as the moduli space of elliptic curves equipped with a level $N$ structure. Specifically, the points of $X_0(N)$ parameterize isomorphism classes of pairs $(E,C)$ where $E$ is an elliptic curve over $\C$ and $C$ is a cyclic subgroup of $E$ of order $N$.

Weierstrass points on $X_0(N)$ have been studied by a number of authors (see, for example, \cite{LeN}, \cite{Atk}, \cite{Atk2}, \cite{Ogg}, \cite{OggHyp}, \cite{Roh2}, \cite{Roh}, \cite{K1}, \cite{K2}, \cite{AO}, \cite{AP}, and \cite{IJK}). An interesting open question is to determine those $N$ for which the cusp $\infty$ is a Weierstrass point.  Lehner and Newman \cite{LeN} and Atkin \cite{Atk} showed that $\infty$ is a Weierstrass point for most non-squarefree $N$, while Atkin \cite{Atk2} proved that $\infty$ is not a Weierstrass point when $N$ is prime.

Most central to the present paper is the connection between Weierstrass points and supersingular elliptic curves.  Ogg \cite{Ogg} showed that for modular curves $X_0(pM)$ where $p$ is a prime with $p\nmid M$ and with the genus of $X_0(M)$ equal to $0$, the Weierstrass points of $X_0(pM)$ occur at points whose underlying elliptic curve is supersingular when reduced modulo $p$. So in particular, $\infty$ is not a Weierstrass point in these cases, extending \cite{Atk2}.  This has recently been confirmed by Ahlgren, Masri and Rouse \cite{AMR} using a non-geometric proof. Ahlgren and Ono \cite{AO} showed for the $M=1$ case that in fact all supersingular elliptic curves modulo $p$ correspond to Weierstrass points of $X_0(p)$, and they demonstrated a precise correspondence between the two sets.  In order to state their result, we make the following definitions.

For $p$ and $M$ as above, let
\[F_{pM}(x):=\prod_{Q\in Y_0(N)}(x-j(Q))^{\wt(Q)},\]
where $j(z)=q\inv+744+196884q+\cdots$ is the usual elliptic modular function defined on $\Gamma$, and $j(Q)=j(\tau)$ for any $\tau\in\H$ with $Q=\G_0(N)\tau$.  This is the divisor polynomial for the Weierstrass points of $Y_0(N)$.  Next, for a prime $p$ we define
\[S_p(x):=\mathop{\prod_{E/\overline{\F}_p}}_{\mathrm{supersingular}} (x-j(E))\in\F_p[x],\]
where the product is over all $\overline{\F}_p$-isomorphism classes of supersingular elliptic curves.  It is well known that $S_p(x)$ has degree $g_p+1$, where $g_p$ is the genus of $X_0(p)$. Ahlgren and Ono \cite{AO} proved the following.

\begin{thm}\label{AOthm} If $p$ is prime, then $F_p(x)$ has $p$-integral rational coefficients and 
\[F_p(x)\equiv S_p(x)^{g_p(g_p-1)}\pmod{p}.\]
\end{thm}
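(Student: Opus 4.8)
The plan is to realize the Weierstrass divisor of $X_0(p)$ as (essentially) the divisor of the Wronskian of a basis of weight-two cusp forms, and then to reduce that Wronskian modulo $p$, where Ogg's theorem together with the structure of modular forms mod $p$ forces the divisor to be a power of the supersingular polynomial.

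Set $g:=g_p$ and use the identification $\mathcal{H}^1(X_0(p))\cong S_2(\G_0(p))$ via $f\mapsto f(z)\,dz$. Since $\infty$ is not a Weierstrass point (Atkin), there is a basis $f_1,\dots,f_g$ of $S_2(\G_0(p))$ with integer $q$-coefficients and $f_i=q^i+O(q^{g+1})$. Put $D:=q\,\tfrac{d}{dq}$ and form the Wronskian $W:=\det\!\big(D^{\,i-1}f_j\big)_{1\le i,j\le g}$. I would invoke the classical facts that the quasimodular correction terms cancel in this determinant, so that $W$ is a \emph{holomorphic} cusp form of weight $g(g+1)$ on $\G_0(p)$ with integer $q$-coefficients and leading term $q^{g(g+1)/2}$ times the Vandermonde determinant of $1,\dots,g$ (a $p$-adic unit, as $p>g$); and that the holomorphic section $W(z)\,(dz)^{g(g+1)/2}$ of $\Omega^{\otimes g(g+1)/2}$ has divisor $\sum_{Q\in X_0(p)}\wt(Q)\,Q$. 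A short local computation at an elliptic point of $X_0(p)$ of order $e$ — where every $f_i$ is forced to vanish to order at least $e-1$ — shows that $W$ vanishes there to order exactly $\tfrac{g(g+1)}{2}(e-1)$ and that the point carries Weierstrass weight $0$; with $\wt(\infty)=\wt(0)=0$ this is consistent with the valence formula, so the divisor above is supported on $Y_0(p)$ away from the elliptic points. Pushing it forward along $X_0(p)\to X(1)$ identifies the ``$j$-part'' of $W$ with $F_p(j)$ up to explicit powers of $\Delta,E_4,E_6$ coming from the cusps and the points over $j=0,1728$. The $p$-integrality of $F_p$ follows already from Ogg's theorem, since supersingular reduction forces each $j(Q)$ to be $p$-integral.

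The heart of the argument is the reduction modulo $p$. Using the structure of modular forms of level $\G_0(p)$ modulo $p$ — via the Deligne--Rapoport model of $X_0(p)$ in characteristic $p$ (two copies of $X(1)$ meeting at the supersingular points), or equivalently via $E_2\equiv E_{p+1}$, $E_{p-1}\equiv1\pmod p$ and Serre's $\theta=q\,\tfrac{d}{dq}$ operator — one shows that $W\bmod p$ is the $q$-expansion of a mod-$p$ modular form on $\mathrm{SL}_2(\Z)$ of an explicit weight; dividing out the power of $\Delta$ responsible for its vanishing at $\infty$ leaves a mod-$p$ level-one form $V$ which does not vanish at $\infty$. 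By Ogg's theorem every Weierstrass point of $X_0(p)$ reduces to a supersingular elliptic curve mod $p$, so the divisor of $V$ on $X(1)_{\overline{\F}_p}$ is supported on the supersingular locus together with possibly $j=0$ and $j=1728$; a mod-$p$ level-one form with this property that is nonzero at $\infty$ must be of the form $c\,E_4^{a}E_6^{b}A^{m}$, where $A=E_{p-1}\bmod p$ is the Hasse invariant, $a\in\{0,1,2\}$, $b\in\{0,1\}$, $m\ge0$. Substituting Deligne's identity $A\equiv E_4^{\delta_3}E_6^{\delta_2}\Delta^{\ell}\,\widetilde S_p(j)\pmod p$, where $S_p(x)=x^{\delta_3}(x-1728)^{\delta_2}\widetilde S_p(x)$ with $\delta_2,\delta_3$ depending only on $p\bmod12$, and reducing the resulting $E_4$- and $E_6$-powers via $E_4^3=j\Delta$ and $E_6^2=(j-1728)\Delta$, exhibits the $j$-part of $W\bmod p$ as $\widetilde S_p(j)^{m}$ times a monomial in $j$ and $j-1728$.

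Comparing with the second paragraph gives $F_p(x)\equiv\widetilde S_p(x)^{m}\,x^{u}(x-1728)^{v}\pmod p$ for nonnegative integers $u,v$; matching total degrees, $\deg F_p=\sum_Q\wt(Q)=g^3-g=(g+1)g(g-1)$ against $\deg\widetilde S_p=g+1-\delta_2-\delta_3$, together with the fact that the contributions at $j=0$ and $j=1728$ can come only from $\delta_2,\delta_3$ and from the non-elliptic points of $X_0(p)$ above $0$ and $1728$, forces $m=g(g-1)$, $u=\delta_3 g(g-1)$, $v=\delta_2 g(g-1)$; hence $F_p(x)\equiv\widetilde S_p(x)^{g(g-1)}x^{\delta_3 g(g-1)}(x-1728)^{\delta_2 g(g-1)}=S_p(x)^{g(g-1)}\pmod p$. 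The step I expect to be the main obstacle is precisely this reduction modulo $p$: one must pin down the exact weight of $W\bmod p$ as a level-one mod-$p$ form and justify the compatibility of the Wronskian with the $\theta$-operator and with the Deligne--Rapoport degeneration, and one must carry out the orbifold bookkeeping at the order-two and order-three points carefully enough that the $E_4,E_6$-corrections in Deligne's identity account \emph{exactly} for the factors $x^{\delta_3 g(g-1)}$ and $(x-1728)^{\delta_2 g(g-1)}$ — so that the final statement is an equality of polynomials rather than an equality up to bounded powers of $x$ and $x-1728$.
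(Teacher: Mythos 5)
This theorem is not proved in the present paper (it is quoted from Ahlgren--Ono \cite{AO}), but the same machinery is carried out in full in Sections \ref{basissec}--\ref{mainsec} for the $X_0^+(p)$ analogue, so a direct comparison is possible. Your skeleton --- echelon basis of $S_2(\Gamma_0(p))$, $p$-integrality of the Wronskian via the Vandermonde determinant and Sturm's bound, passage to level one mod $p$, Deligne's congruence $\widetilde{F}(E_{p-1},x)\equiv\widetilde{S}_p(x)$, elliptic-point bookkeeping, and a closing degree count --- is the right one. However, the central step has a genuine gap. You obtain the factor $\widetilde{S}_p(x)^m$ by arguing that (i) Ogg's theorem confines the divisor of the reduced form to the supersingular locus, and (ii) a level-one mod-$p$ form with that support which is nonzero at $\infty$ must equal $c\,E_4^aE_6^bA^m$ with $A$ the Hasse invariant. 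Claim (ii) is false: for any monic $P(x)$ whose roots lie in the supersingular locus (away from $0,1728$), the form $\Delta^{\deg P}P(j)\cdot E_4^aE_6^b$ is a level-one mod-$p$ form, nonvanishing at $\infty$, with divisor exactly the roots of $P$ counted with the multiplicities you chose. Support alone does not force equal multiplicities at all supersingular points, and equal multiplicity $g(g-1)$ at \emph{every} supersingular point is precisely the content of the theorem --- so at this step you have assumed what is to be proved. Your final degree count then only pins down the total $\sum_s m_s$, not the individual exponents.

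The actual mechanism (in \cite{AO}, mirrored here in Lemma \ref{wpsquared} and (\ref{wpws})) does not use Ogg's theorem at all. One replaces each $f_i$ by a level-one form $b_i\in S_{p+1}$ with $b_i\equiv f_i\pmod p$ and forms the norm $\widetilde{\mathcal{W}}_p=\prod_{A\in\Gamma_0(p)\backslash\Gamma}\mathcal{W}_p|A$, a genuine characteristic-zero level-one cusp form of weight $g(g+1)(p+1)$ with $\widetilde{\mathcal{W}}_p\equiv \mathcal{W}_p^2\pmod p$. Since $W(b_1,\dots,b_g)^2$ has weight $2g(g+p)$ and $g(g+1)(p+1)-2g(g+p)=(g^2-g)(p-1)$, the congruence $\widetilde{\mathcal{W}}_p\equiv W^2E_{p-1}^{\,g^2-g}$ is forced, and the exponent $g^2-g$ --- hence the uniform multiplicity at every supersingular point --- comes out of this weight bookkeeping; only afterwards does the degree count $\deg F_p=g^3-g=\deg S_p(x)^{g^2-g}$ eliminate the leftover factor $\widetilde{F}(W,x)^2$ and the elliptic-point monomials. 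Two smaller corrections: the $p$-integrality of $F_p(x)$ is deduced from the $p$-integrality of the Wronskian together with the divisor-polynomial formalism (\ref{ftilde}), not from Ogg's theorem, whose conclusion concerns reductions of points rather than coefficients of $F_p$; and your assertion that elliptic points carry Weierstrass weight $0$ is not a ``short local computation'' --- when $0$ or $1728$ is ordinary it is a consequence of the theorem, and it should be carried as an unknown through the bookkeeping rather than used as an input.
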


El-Guindy \cite{ElG} generalized Theorem \ref{AOthm} to those cases where $M$ is squarefree, showing that $F_{pM}(x)$ has $p$-integral rational coefficients and is divisible by $\widetilde{S}_p(x)^{\mu(M)g_{pM}(g_{pM}-1)}$,
where $\mu(M):=[\G:\G_0(M)]$ and $g_{pM}$ is the genus of $X_0(pM)$, and where
\begin{equation}\label{Stilde}\widetilde{S}_p(x):=\mathop{\prod_{E/\overline{\F}_p\;\mathrm{supersingular}}}_{j(E)\ne 0,1728} (x-j(E)).\end{equation}
He also gave an explicit factorization of $F_{pM}(x)$ in most cases where $M$ is prime.  Generalizing Theorem \ref{AOthm} in a different direction, Ahlgren and Papanikolas \cite{AP} gave a similar result for higher order Weierstrass points on $X_0(p)$, which are defined in relation to higher order differentials.

In this paper we consider the modular curve $X_0^+(p)$, the quotient space of $X_0(p)$ under the action of the Atkin-Lehner involution $w_p$, which maps $\tau\mapsto -1/p\tau$ for $\tau\in\H$.  There is a natural projection map $\pi:X_0(p)\to X_0^+(p)$ which sends
a point $Q\in X_0(p)$ to its equivalence class $\pi(Q)=\overline{Q}$ in $X_0^+(p)$. This is a 2-to-1 mapping, ramified at those points $Q\in X_0(p)$ that remain fixed by $w_p$.  Therefore we set
\begin{eqnarray}\label{vq}v(Q):=\begin{cases}2&\mbox{if $w_p(Q)=Q$},\\
1&\mbox{otherwise,}\end{cases}\end{eqnarray}
so that $v(Q)$ is equal to the multiplicity of the map $\pi$ at $Q$.
We now define a divisor polynomial for the Weierstrass points of $\X$. We will set our product to be over $X_0(p)$ rather than $\X$ to preserve the desired $p$-integrality of the coefficients. Let
\[\mathcal{F}_p(x):=\prod_{Q\in Y_0(p)}(x-j(Q))^{v(Q)\wt(\overline{Q})},\]
where $\wt(\overline{Q})$ is the Weierstrass weight of the image $\overline{Q}$ of $Q$ in $X_0^+(p)$. The zeros of this polynomial capture those non-cuspidal points of $X_0(p)$ which map to Weierstrass points in $\X$. The two cusps of $X_0(p)$ at 0 and $\infty$ are interchanged by $w_p$, so that $X_0^+(p)$ has a single cusp at $\infty$, which may or may not be a Weierstrass point.  Atkin checked all primes $p\leq 883$ and conjectured that $\infty$ is a Weierstrass point for all $p> 389$.  Stein has confirmed this for all $p<3000$, and his table of results can be found in \cite{Stein}. Therefore $\mathcal{F}_p(x)$ is a polynomial of degree $2((g_p^+)^3-g_p^+ -\wt(\infty))$, where $g^+_p$ is the genus of $\X$.

Recalling that the supersingular polynomial $S_p(x)$ factors over $\F_p[x]$ into linear and irreducible quadratic factors, we separate these factors by defining
\[S_p^{(l)}(x):=\mathop{\prod_{E/\overline{\F}_p\;\mathrm{supersingular}}}_{j(E)\in\F_p} (x-j(E))\qquad\mbox{ and }\qquad S_p^{(q)}(x):=\mathop{\prod_{E/\overline{\F}_p\;\mathrm{supersingular}}}_{j(E)\in\F_{p^2}\backslash\F_p} (x-j(E)) ,\]
so that $S_p(x)=S_p^{(l)}(x)\cdot S_p^{(q)}(x)$. Our main theorem gives an analogue of Theorem \ref{AOthm} for $\mathcal{F}_p(x)$. We require an assumption that $\mathcal{H}^1(\X)$ has a \emph{good basis}, a condition about $p$-integrality which we define later in Section \ref{basissec}. Computations suggest that most, if not all, such spaces satsify this condition. Indeed, each $\mathcal{H}^1(\X)$ with $p<3200$ has a good basis.

\begin{thm}\label{main} Let $p$ be prime and suppose that $\mathcal{H}(\X)$ has a good basis.  Then $\mathcal{F}_p(x)$ has $p$-integral rational  coefficients, and there exists a polynomial $H(x)\in\mathbb{F}_p[x]$ such that
\[\mathcal{F}_p(x)\equiv S_p^{(q)}(x)^{g_p^+(g_p^+-1)}\cdot H(x)^2\pmod{p}.\]
\end{thm}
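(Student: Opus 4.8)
The plan is to mirror the Ahlgren--Ono strategy from Theorem~\ref{AOthm}, replacing differentials on $X_0(p)$ by differentials on $\X$ pulled back along $\pi$. First I would fix a good basis $\{\omega_1,\dots,\omega_{g_p^+}\}$ of $\mathcal{H}^1(\X)$ and translate it into a basis of weight-$2$ cusp forms $\{f_1,\dots,f_{g_p^+}\}$ for $\G_0(p)$ that are fixed by $w_p$ (i.e.\ lie in the $+$-eigenspace), normalized to have $p$-integral rational $q$-expansions; the pullback $\pi^*\omega_i$ corresponds to $f_i$. The Weierstrass weight of a point $\overline{Q}\in\X$ is computed from the vanishing orders of these $f_i$ at a preimage $Q$, with the caveat that at a ramification point of $\pi$ the local parameter on $\X$ is the square of the one on $X_0(p)$; this is precisely why the exponent $v(Q)$ appears in $\mathcal{F}_p(x)$, and I would make this bookkeeping explicit so that $\mathcal{F}_p(x)$ equals (up to a power of $j'$ and the Eisenstein factor $E_4E_6$) the divisor polynomial of the Wronskian $W:=\mathrm{Wr}(f_1,\dots,f_{g_p^+})$, a cusp form for $\G_0(p)$ of weight $g_p^+(g_p^++1)$, viewed appropriately on $\X$.

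Next I would invoke the $p$-integrality of the good basis to conclude that $W$ has $p$-integral rational $q$-expansion, hence so does $\mathcal{F}_p(x)$ after dividing out the known factors; this gives the first assertion. For the congruence, I would reduce $W$ modulo $p$ and apply the Deligne congruence $E_{p-1}\equiv 1\pmod p$ together with the fact that the reduction mod $p$ of a weight-$k$ form lies in the Hasse-invariant filtration, so its divisor on the supersingular locus is governed by $S_p(x)$. The key point, exactly as in \cite{AO}, is that after reducing to a form of weight $p-1$ or adjusting by powers of $E_{p-1}$, the supersingular part of $\mathcal{F}_p(x)\bmod p$ is forced to be divisible by $S_p(x)$ to a power dictated by the weight of $W$ and the total Weierstrass weight $g_p^+(g_p^+{}^2-1)$, which combinatorially yields the exponent $g_p^+(g_p^+-1)$.

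The new feature, and the main obstacle, is separating $S_p(x)$ into $S_p^{(l)}(x)$ and $S_p^{(q)}(x)$ and explaining why only the quadratic part $S_p^{(q)}(x)$ survives to the stated power while the linear part gets absorbed into $H(x)^2$. Here I would argue using the action of $w_p$ on the supersingular points: the Atkin--Lehner involution on $X_0(p)$ reduces mod $p$ to the Frobenius-type involution $j\mapsto j^p$ on the supersingular locus (this is Deuring/Serre's description via the supersingular module), so $w_p$-fixed supersingular points correspond exactly to $j$-invariants in $\F_p$, i.e.\ to $S_p^{(l)}(x)$, while the $w_p$-orbits of size two correspond to conjugate pairs in $\F_{p^2}\setminus\F_p$, i.e.\ to $S_p^{(q)}(x)$. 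Because $\mathcal{F}_p(x)$ is built from $v(Q)\wt(\overline{Q})$ and $v(Q)=2$ precisely at the ramification points, every linear supersingular factor enters with an even multiplicity coming from the $v(Q)=2$ weighting, so it can be collected into $H(x)^2$; meanwhile each quadratic factor, coming from a free $w_p$-orbit, contributes with the unramified weighting $v(Q)=1$ but with both points of the orbit contributing equal weight, producing $S_p^{(q)}(x)$ to the full power $g_p^+(g_p^+-1)$. Making the parity and orbit-counting bookkeeping airtight --- in particular handling the elliptic points with $j=0,1728$ and the interaction of ramification of $\pi$ with ramification of the uniformizer $j$ --- is where the real work lies; the rest is a careful transcription of the Wronskian/Hasse-invariant machinery of \cite{AO}.
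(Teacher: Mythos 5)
Your overall skeleton --- the Wronskian of a $p$-integral basis of the $w_p$-invariant weight-$2$ cusp forms, descent to level one, Deligne's congruence $\widetilde{F}(E_{p-1},x)\equiv \widetilde{S}_p(x)$, and the relevance of the $w_p$-fixed points --- matches the paper. But the mechanism you propose for the central new phenomenon, the splitting into $S_p^{(q)}(x)^{g^2-g}\cdot H(x)^2$ (writing $g:=g_p^+$), is not the one that works, and as described it has a genuine gap. You argue directly from the definition of $\mathcal{F}_p(x)$ that linear supersingular factors enter with even multiplicity ``coming from the $v(Q)=2$ weighting.'' This cannot be made airtight: the factorization of $\mathcal{F}_p(x)$ modulo $p$ is not visible from its definition, since distinct complex values $j(Q)$ can become congruent modulo a prime above $p$, and points $Q$ with $v(Q)=1$ (free $w_p$-orbits) can perfectly well have $j(Q)$ reducing to a linear supersingular $j$-invariant, contributing exponents your parity argument does not control. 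Likewise the exponent $g^2-g$ on $S_p^{(q)}$ does not come from orbit-counting on $X_0(p)$; it comes from the weight discrepancy between the level-one norm of the Wronskian and its weight-$(p+1)$ congruent replacement, i.e.\ from the factor $E_{p-1}^{g^2-g}$ needed to equalize weights.

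The two ingredients you are missing are: (i) the pulled-back Wronskian vanishes at a point $Q$ to order $v(Q)\wt(\overline{Q})+\tfrac{g(g+1)}{2}(v(Q)-1)$, not merely $v(Q)\wt(\overline{Q})$; the extra term at the ramification points of $\pi$ contributes a factor $H_p(x)^{g(g+1)/2}$ to the divisor polynomial, where $H_p$ is the monic polynomial whose roots are the $j$-invariants of the $w_p$-fixed points (a product of Hilbert class polynomials for discriminants $-p$, $-4p$); and (ii) the Kaneko--Zagier congruence $H_p(x)\equiv S_p^{(l)}(x)^2\pmod{p}$, which requires Kronecker's relations for the modular equation together with a degree count comparing Riemann--Hurwitz for $\pi$ with Ogg's identification of $g$ as the number of conjugate pairs of quadratic supersingular $j$-invariants. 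It is the resulting factor $S_p^{(l)}(x)^{g^2+g}$ on the Wronskian side of the congruence that cancels (indeed exceeds) the linear part of $\widetilde{S}_p(x)^{g^2-g}$ on the Eisenstein side, leaving exactly $S_p^{(q)}(x)^{g^2-g}$ multiplying $\mathcal{F}_p(x)$; the square $H(x)^2$ then comes from $\widetilde{F}(W^2,x)$ being $\widetilde{F}(W,x)^2$ up to controlled powers of $x$ and $x-1728$, plus a case-by-case parity check at the elliptic points --- not from the $v(Q)=2$ weighting in the definition of $\mathcal{F}_p$. Without (i) and (ii) the linear supersingular factors cannot be removed from the right-hand side, and the claimed square structure is unproved.
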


\begin{note} From computational evidence, it appears that $H(x)$ is always coprime to $S_p(x)$, so that contrary to the situation on $X_0(p)$, only those supersingular points with quadratic irrational $j$-invariants correspond to Weierstrass points of $\X$. We give a heuristic argument for this phenomenon in Section \ref{fixedptssec}.\end{note}

In Section \ref{divisorssec} we start by reviewing some preliminary facts about divisors of polynomials of modular forms.  We then consider the reduction of $X_0(p)$ modulo $p$ in Section \ref{fixedptssec} in order to obtain a key result about the $w_p$-fixed points of $X_0(p)$. In Section \ref{basissec} we describe our good basis condition for $\mathcal{H}^1(\X)$. Next, in Section \ref{wronsksec} we derive a special cusp form on $\G_0(p)$ which encodes the Weierstrass weights of points on $\X$. In Section \ref{mainsec}, we prove Theorem \ref{main}, and in Section \ref{exsec}, we demonstrate Theorem \ref{main} for the curve $X_0^+(67)$.

\section{Divisor polynomials of modular forms}\label{divisorssec}

Let $M_k$ (resp. $M_k(p)$) denote the space of modular forms of weight $k$ on $\G$ (resp. $\G_0(p)$), and let $S_k$ (resp. $S_k(p)$) be the subspace of cusp forms.  For even $k\geq 4$, the Eisenstein series $E_k\in M_k$ is defined as
\[E_k(z):=1-\frac{2k}{B_k}\sum_{n=1}^\infty \sigma_{k-1}(n)q^n,\]
where $B_k$ is the $k$th Bernoulli number, and $\sigma_{k-1}(n)=\sum_{d\mid n}d^{k-1}$.  Then the function
\[\Delta(z):=\frac{E_4(z)^3-E_6(z)^2}{1728}=q-24q^2+252q^3-1472q^4+\cdots\]
is the unique normalized cusp form in $S_{12}$.

We briefly recall how to build a divisor polynomial whose zeros are exactly the $j$-values at which a given modular form $f\in M_k$ vanishes, excluding those zeros that may occur at the elliptic points $i$ and $\rho:=e^{2\pi i/3}$ (for details, see \cite{AO} or Section 2.6 of \cite{Ono}).  We define

\begin{eqnarray}\label{etilde}
\widetilde{E}_k(z):=\begin{cases}
1&\mbox{if $k\equiv 0\pmod{12}$},\\
E_4(z)^2E_6(z)&\mbox{if $k\equiv 2\pmod{12}$},\\
E_4(z)&\mbox{if $k\equiv 4\pmod{12}$},\\
E_6(z)&\mbox{if $k\equiv 6\pmod{12}$},\\
E_4(z)^2&\mbox{if $k\equiv 8\pmod{12}$},\\
E_4(z)E_6(z)&\mbox{if $k\equiv 10\pmod{12}$},
\end{cases}
\end{eqnarray}

and

\begin{eqnarray}\label{mk}
m(k):=\begin{cases}
\lfloor k/12\rfloor&\mbox{if $k\not\equiv 2\pmod{12}$},\\
\lfloor k/12\rfloor-1&\mbox{if $k\equiv 2\pmod{12}$}.
\end{cases}
\end{eqnarray}

Now let $f\in M_k$ have leading coefficient 1.  Then we note that (\ref{etilde}) and (\ref{mk}) are defined such that the quotient
\begin{eqnarray}\label{ftilde}\widetilde{F}(f,j(z)):=\frac{f(z)}{\Delta(z)^{m(k)}\widetilde{E}_k(z)}\end{eqnarray}
is a polynomial in $j(z)$.  Therefore, we define $\widetilde{F}(f,x)$ to be the unique polynomial in $x$ satisfying (\ref{ftilde}).
Furthermore, if $f$ has $p$-integral rational coefficients, then so does $\widetilde{F}(f,x)$.

Finally, we record a result about the divisor polynomial of the square of a modular form.

\begin{lem}\label{squaredivisor} Let $f\in M_k$. Then
\begin{eqnarray*}
\widetilde{F}(f^2,x)=\begin{cases}
\widetilde{F}(f,x)^2&\mbox{if $k\equiv 0\pmod{12}$},\\
x(x-1728)\widetilde{F}(f,x)^2&\mbox{if $k\equiv 2\pmod{12}$},\\
\widetilde{F}(f,x)^2&\mbox{if $k\equiv 4\pmod{12}$},\\
(x-1728)\widetilde{F}(f,x)^2&\mbox{if $k\equiv 6\pmod{12}$},\\
x\widetilde{F}(f,x)^2&\mbox{if $k\equiv 8\pmod{12}$},\\
(x-1728)\widetilde{F}(f,x)^2&\mbox{if $k\equiv 10\pmod{12}$}.
\end{cases}
\end{eqnarray*}
\end{lem}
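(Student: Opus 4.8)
The plan is to work directly from the defining relation (\ref{ftilde}). Since $f$ has leading coefficient $1$, so does $f^2\in M_{2k}$, and hence by definition
\[\widetilde{F}(f^2,j(z))=\frac{f(z)^2}{\Delta(z)^{m(2k)}\widetilde{E}_{2k}(z)}.\]
Substituting $f(z)=\Delta(z)^{m(k)}\widetilde{E}_k(z)\,\widetilde{F}(f,j(z))$ from (\ref{ftilde}) gives
\[\widetilde{F}(f^2,j(z))=\widetilde{F}(f,j(z))^2\cdot\frac{\Delta(z)^{2m(k)}\widetilde{E}_k(z)^2}{\Delta(z)^{m(2k)}\widetilde{E}_{2k}(z)}.\]
So it suffices to identify the rational function $R_k:=\Delta^{2m(k)}\widetilde{E}_k^2/\bigl(\Delta^{m(2k)}\widetilde{E}_{2k}\bigr)$ with the claimed polynomial in $j$ (namely $1$, $j(j-1728)$, $1$, $j-1728$, $j$, $j-1728$ in the six cases); since $j$ generates the function field of $\G\backslash\H$, an identity of meromorphic functions in $z$ is the same as an identity of polynomials in $x$.

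The computation rests on two standard identities: from $1728\Delta=E_4^3-E_6^2$ and $j=E_4^3/\Delta$ one gets $E_4^3=j\Delta$ and $E_6^2=(j-1728)\Delta$. Then I would proceed case by case on $k\bmod 12$. For each residue class, determine $2k\bmod 12$, read off $\widetilde{E}_k,\widetilde{E}_{2k}$ from (\ref{etilde}) and $m(k),m(2k)$ from (\ref{mk}), and simplify $R_k$. For example, writing $k=12t+6$ in the case $k\equiv 6\pmod{12}$: here $\widetilde{E}_k=E_6$, $m(k)=t$, while $2k\equiv 0$, so $\widetilde{E}_{2k}=1$, $m(2k)=2t+1$, and
\[R_k=\frac{\Delta^{2t}E_6^2}{\Delta^{2t+1}}=\frac{E_6^2}{\Delta}=j-1728,\]
matching the stated formula; the case $k\equiv 8$ gives $R_k=E_4^3/\Delta=j$ similarly, and the remaining cases are analogous.

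The only delicate point — the main obstacle, such as it is — is the bookkeeping when $k\equiv 2\pmod{12}$. There $m(k)$ carries the exceptional $-1$ while $2k\equiv 4\pmod{12}$ uses the ordinary definition of $m$, so one must track carefully that $2m(k)$ versus $m(2k)$ leaves a net $\Delta^{-2}$, and that $\widetilde{E}_k^2=(E_4^2E_6)^2$ against $\widetilde{E}_{2k}=E_4$ leaves $E_4^3E_6^2$, which together produce exactly $(E_4^3/\Delta)(E_6^2/\Delta)=j(j-1728)$. Beyond this case, the argument is a short finite verification, and the $p$-integrality of $\widetilde{F}(f^2,x)$ needs nothing new since $f^2$ has $p$-integral coefficients whenever $f$ does, by the remark following (\ref{ftilde}).
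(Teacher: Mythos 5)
Your proposal is correct and follows essentially the same route as the paper: apply the defining relation (\ref{ftilde}) to both $f$ and $f^2$, reduce to identifying $\Delta^{2m(k)-m(2k)}\widetilde{E}_k^2/\widetilde{E}_{2k}$ case by case via $E_4^3=j\Delta$ and $E_6^2=(j-1728)\Delta$. Your sample computations (including the delicate $k\equiv 2\pmod{12}$ case) match the paper's.
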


\begin{proof} Using (\ref{ftilde}) for both $f$ and $f^2$ yields

\begin{equation*} f(z)^2=\Delta(z)^{2m(k)}\widetilde{E}_k(z)^2\widetilde{F}(f,j(z))^2,
\end{equation*}
and
\begin{equation*} f(z)^2=\Delta(z)^{m(2k)}\widetilde{E}_{2k}(z)\widetilde{F}(f^2,j(z)).
\end{equation*}

Thus
\begin{equation*}
\widetilde{F}(f^2,j(z))=\Delta(z)^{2m(k)-m(2k)}\cdot\frac{\widetilde{E}_k(z)^2}{\widetilde{E}_{2k}(z)}\cdot \widetilde{F}(f,j(z))^2.
\end{equation*}

Then by (\ref{etilde}) and (\ref{mk}) we have

\begin{eqnarray*}
\widetilde{F}(f^2,j(z))=\begin{cases}
\widetilde{F}(f,j(z))^2&\mbox{if $k\equiv 0\pmod{12}$},\\
\Delta(z)^{-2}E_4(z)^3E_6(z)^2\widetilde{F}(f,j(z))^2&\mbox{if $k\equiv 2\pmod{12}$},\\
\widetilde{F}(f,j(z))^2&\mbox{if $k\equiv 4\pmod{12}$},\\
\Delta(z)^{-1}E_6(z)^2\widetilde{F}(f,j(z))^2&\mbox{if $k\equiv 6\pmod{12}$},\\
\Delta(z)^{-1}E_4(z)^3\widetilde{F}(f,j(z))^2&\mbox{if $k\equiv 8\pmod{12}$},\\
\Delta(z)^{-1}E_6(z)^2\widetilde{F}(f,j(z))^2&\mbox{if $k\equiv 10\pmod{12}$},
\end{cases}
\end{eqnarray*}

Since $j(z)=\frac{E_4(z)^3}{\Delta(z)}$ and $j(z)-1728=\frac{E_6(z)^2}{\Delta(z)}$, the result follows.
\end{proof}

\section{Modular curves modulo $p$}\label{fixedptssec}

Here we recall the undesingularized reduction of $X_0(p)$ modulo $p$, due to Deligne and Rapoport \cite{DR}. The description below closely follows one given by Ogg \cite{OggRed}. The model of $X_0(p)$ modulo $p$ consists of two copies of $X_0(1)$ which meet transversally in the supersingular points (Figure \ref{Fig:x0p}).

\begin{figure}[b]\caption{Reduction of $X_0(p)$}\label{Fig:x0p}
\includegraphics{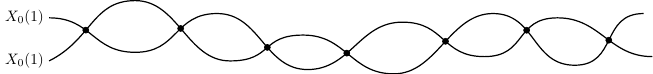}
\end{figure}

 The Atkin-Lehner operator $w_p$ is compatible with this reduction.  It gives an isomorphism between the two copies of $X_0(1)$ which preserves the supersingular locus, by fixing the points corresponding to supersingular curves defined over $F_p$, and interchanging those defined over $\F_{p^2}\backslash\F_p$ with their conjugates.  Therefore, dividing out by the action of $w_p$ glues together the two copies of $X_0(1)$. The singularities at the linear supersingular points are thus resolved, while the conjugate pairs of quadratic supersingular points are glued together. This results in a model for the reduction modulo $p$ of $X_0^+(p)$ consisting of one copy of $X_0(1)$ which self-intersects at each point representing a pair of conjugate quadratic supersingular points (Figure \ref{Fig:x0+p}). This resolution at the linear supersingular points may explain their absence among the Weierstrass points of $X_0^+(p)$.
 
\begin{figure}[h]\caption{Reduction of $X_0^+(p)$}\label{Fig:x0+p}
\includegraphics{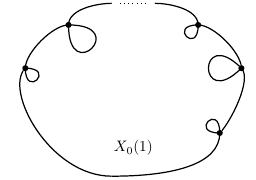}
\end{figure}

To make the correspondence between fixed points and linear supersingular $j$-invariants more precise, let $\mathcal{O}_D=\Z[\frac{1}{2}(D+\sqrt{-D})]$ be the order of the imaginary quadratic field $\Q[\sqrt{-D}]$ with discriminant $-D<0$.  The Hilbert class polynomial $\mathcal{H}_D(x)\in\Z[x]$ is the monic polynomial whose zeros are exactly the $j$-invariants of the distinct isomorphism classes of elliptic curves with complex multiplication by $\mathcal{O}_D$, and its degree is $h(-D)$, the class number of $\mathcal{O}_D$.

The points $Q\in Y_0(p)$ that are fixed by $w_p$ correspond to pairs $(E,C)$ such that $E$ admits complex multiplication by $\sqrt{-p}$, or in other words, $\Z[\sqrt{-p}]$ embeds in $\mbox{End}(E)$, the endomorphism ring of $E$ over the complex numbers (see e.g. \cite{OggHyp}).  Since $\mbox{End}(E)$ must be an order in an imaginary quadratic field, we have
\[\mbox{End}(E)\cong\begin{cases}\mathcal{O}_{4p}&\mbox{if }p\equiv 1\pmod{4},\\
\mathcal{O}_{p}\mbox{ or }\mathcal{O}_{4p}&\mbox{if }p\equiv 3\pmod{4}.\end{cases}\]
Now define 
\begin{eqnarray}\label{Hp}H_p(x):=\mathop{\prod_{\tau\in\G_0(p)\backslash\H}}_{v(Q_\tau)=2}(x-j(\tau)),\end{eqnarray}
the monic polynomial whose zeros are precisely the $j$-invariants of the $w_p$-fixed points of $Y_0(p)$.  Then we have
\[H_p(x)=\begin{cases}\mathcal{H}_{4p}(x)&\mbox{if }p\equiv 1\pmod{4},\\
\mathcal{H}_p(x)\cdot\mathcal{H}_{4p}(x)&\mbox{if }p\equiv 3\pmod{4}.\end{cases}\]
The following result is due independently to Kaneko and Zagier.
\begin{prop}\label{Hpsquare}\label{Kaneko} For $p$ prime, there exists a monic polynomial $T(x)\in \Z_p[x]$ with distinct roots such that $H_p(x)\equiv T(x)^2\pmod{p}$.\end{prop}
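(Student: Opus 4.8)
The plan is to prove the sharper congruence $H_p(x)\equiv S_p^{(l)}(x)^2\pmod p$. Since distinct supersingular curves have distinct $j$-invariants, $S_p^{(l)}(x)$ has distinct roots, so the proposition follows by taking $T(x)\in\Z_p[x]$ to be any monic lift of $S_p^{(l)}(x)$. (We may assume $p\ge 5$; the case $p=3$ is checked directly, e.g.\ $H_3(x)=x(x-54000)\equiv x^2\pmod 3$.)

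First I would locate the roots of $H_p(x)\bmod p$. They are the reductions, with multiplicity, of the roots of $H_p(x)$ over $\overline\Q$, namely the $j$-invariants of elliptic curves $E$ with $\sqrt{-p}\in\mathrm{End}(E)$; these are algebraic integers and so reduce mod a fixed prime above $p$. Since $p$ ramifies in $\Z[\sqrt{-p}]$ it is non-split there, so Deuring's theorem gives that every such $E$ has supersingular reduction, and since the reduction of $\sqrt{-p}$ is an endomorphism of square $-p$ of the reduced curve, that curve admits a model over $\F_p$ with vanishing trace of Frobenius, hence has $j$-invariant in $\F_p$. Conversely, Deuring's lifting theorem produces for each supersingular $E_0/\F_p$ — whose Frobenius $\pi$ satisfies $\pi^2=-p$ — a lift $E$ with $\Z[\pi]\subseteq\mathrm{End}(E)$, forcing $\mathrm{End}(E)$ to be $\mathcal{O}_p$ or $\mathcal{O}_{4p}$. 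Hence $H_p(x)\bmod p=\prod_{j_0}(x-j_0)^{m(j_0)}$, the product over the supersingular $j$-invariants $j_0\in\F_p$, and everything reduces to showing $m(j_0)=2$ for each one.

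For the multiplicity I would use the Deligne--Rapoport model $\mathcal{X}/\Z_p$ of $X_0(p)$ recalled in this section. The $w_p$-fixed locus $\mathcal{Z}\hookrightarrow\mathcal{X}$ is finite and flat over $\Z_p$, its generic fibre is the set of $w_p$-fixed points, and $m(j_0)$ equals the $\Z_p$-length of the part of $\mathcal{Z}$ lying over $j_0$ in the special fibre. Locally at a supersingular point $j_0$ the model $\mathcal{X}$ has the form $\mathrm{Spec}\,\Z_p[u,v]/(uv-p^{e})$, with $u,v$ the parameters along the two copies of $X_0(1)$ and $e=e(j_0)$ depending only on $\#\mathrm{Aut}(E_0)$ (so $e=1$ unless $j_0\in\{0,1728\}$), while $w_p$ interchanges the two components: $(u,v)\mapsto(\varepsilon v,\varepsilon^{-1}u)$ for a unit $\varepsilon$. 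The fixed subscheme is then $\mathrm{Spec}\,\Z_p[u]/(u^2-\varepsilon^{-1}p^{e})$, free of rank $2$ over $\Z_p$ and supported entirely at the node, so $m(j_0)=2$. One can also bypass the geometry: $m(j_0)$ counts the $\overline\Q$-isomorphism classes of curves with CM by $\mathcal{O}_p$ or $\mathcal{O}_{4p}$ that reduce to a fixed supersingular $E_0/\F_p$, which by Deuring's correspondence is the associated sum of Eichler optimal-embedding numbers; that sum equals $2$ at every $j_0$, and summing over all $j_0$ recovers the class-number relations $h(-p)+h(-4p)=2\,\#\{\text{ss }j\in\F_p\}$ for $p\equiv 3\pmod 4$ and $h(-4p)=2\,\#\{\text{ss }j\in\F_p\}$ for $p\equiv 1\pmod 4$.

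I expect this multiplicity computation to be the only genuine obstacle. Along the geometric route one must control the precise shape of the Deligne--Rapoport model and of the $w_p$-action at the elliptic points $j_0=0,1728$ (to which $w_p$-fixed points do reduce, e.g.\ for $p\equiv 2\pmod 3$), and one must justify that $H_p(x)\bmod p$ really records the $\Z_p$-lengths of $\mathcal{Z}$ — that is, that $\mathcal{Z}$ is $\Z_p$-flat and that $j$ separates the fixed points of the generic fibre, so that $H_p$ is, up to a unit, the characteristic polynomial of $j$ acting on $\mathcal{O}_{\mathcal{Z}}$. Along the arithmetic route, all the difficulty sits in the optimal-embedding bookkeeping. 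Everything else reduces to standard facts from the theory of complex multiplication.
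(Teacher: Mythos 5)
Your outline is mathematically reasonable, but it takes a genuinely different and much heavier route than the paper, and its one essential quantitative step is asserted rather than proved. The paper's proof is a citation to the appendix of Kaneko's paper: by Kronecker's congruence $\Phi_p(X,X)\equiv -(X^p-X)^2\pmod p$ together with the Kronecker--Hurwitz factorization $\Phi_p(X,X)=\pm\prod_{t^2\le 4p}\mathcal{H}_{4p-t^2}(X)$, every factor with $t\ne 0$ occurs twice (once for $t$ and once for $-t$) while the $t=0$ factor is exactly $H_p(x)$; hence $H_p(x)\bmod p$ is forced to be the square of a divisor of $X^p-X$, which automatically has distinct roots. That argument is short, elementary, and gives precisely the stated proposition; the identification $T(x)=S_p^{(l)}(x)$ is then obtained separately in Theorem \ref{fixedlinear} by the Riemann--Hurwitz degree count. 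Your plan instead aims at the stronger congruence $H_p(x)\equiv S_p^{(l)}(x)^2$ in one stroke via Deuring reduction/lifting plus a local multiplicity computation; if completed it would subsume both Proposition \ref{Hpsquare} and Theorem \ref{fixedlinear}, which is a genuine gain, and your identification of the roots of $H_p(x)\bmod p$ with the supersingular $j$-invariants in $\F_p$ (both inclusions, via Deuring's criterion and Deuring lifting) is correct for $p\ge 5$.

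The gap is that the entire content now sits in the claim that $m(j_0)=2$ at \emph{every} supersingular $j_0\in\F_p$, and neither of your two routes establishes it. On the geometric side, the local form of the $w_p$-action on the Deligne--Rapoport model (especially at $j_0=0,1728$, where the coarse space differs from the deformation space), the $\Z_p$-flatness of the fixed subscheme, and the identification of $H_p(x)\bmod p$ with the characteristic polynomial of $j$ on it are each nontrivial inputs that you flag but do not supply; they require either a careful argument from Katz--Mazur/Deligne--Rapoport or a precise citation. On the arithmetic side, the class-number relation you quote only controls the \emph{average} multiplicity: note that $h(-4p)=3h(-p)$ when $p\equiv 3\pmod 8$, so the contributions of $\mathcal{O}_p$ and $\mathcal{O}_{4p}$ are very unevenly distributed between the two class polynomials, and the statement that they nevertheless combine to exactly $2$ at each individual $j_0$ is essentially the proposition itself restated in the language of optimal embeddings. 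As written, then, the proposal reduces the result to a statement at least as deep as the one being proved. Two smaller points: the case $p=2$ is not covered (the order-$2$ elliptic point of $X_0(2)$ is also $w_2$-fixed, so $H_2(x)=(x-1728)(x-8000)\equiv x^2$, and the description of fixed points via CM by $\sqrt{-p}$ alone is insufficient there), though this is irrelevant to the paper's application; and the passage from ``$E_0$ has an endomorphism of square $-p$'' to ``$j(E_0)\in\F_p$'' deserves a sentence (degree-$p$ isogenies of supersingular curves are purely inseparable, so such an endomorphism factors through Frobenius and gives $j(E_0)^p=j(E_0)$).
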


\begin{proof} The result follow from Kronecker's relations on the modular equation $\Phi_p(X,Y)$, and may be found in the appendix of \cite{Kan}.\end{proof}

We can now prove the following.

\begin{thm}\label{fixedlinear} Let $p$ be prime. Then we have
\[H_p(x)\equiv S_p^{(l)}(x)^2\pmod{p}.\]
\end{thm}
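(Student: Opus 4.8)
The plan is to compare the two sides over $\overline{\F}_p$ by identifying which supersingular $j$-invariants appear and with what multiplicities. The right-hand side $S_p^{(l)}(x)^2$ is, by definition, the square of the product of $(x - j(E))$ over supersingular $E/\overline{\F}_p$ with $j(E)\in\F_p$; since $S_p^{(l)}$ has distinct roots, this is a polynomial with each $\F_p$-rational supersingular $j$-invariant as a double root and nothing else. For the left-hand side, Proposition~\ref{Kaneko} already tells us $H_p(x)\equiv T(x)^2\pmod p$ for some monic $T\in\Z_p[x]$ with distinct roots, so it suffices to show that the roots of $T(x)$ in $\overline{\F}_p$ are exactly the $\F_p$-rational supersingular $j$-invariants, each occurring once. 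Equivalently, I must show that (i) every root of $H_p(x)\bmod p$ is a supersingular $j$-invariant lying in $\F_p$, and (ii) conversely every $\F_p$-rational supersingular $j$-invariant arises as such a root.

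For direction (i), I would invoke the geometric description from this section: the $w_p$-fixed points of $Y_0(p)$ are the CM points with $\Z[\sqrt{-p}]\subseteq\operatorname{End}(E)$, so $H_p(x)=\mathcal H_p(x)$ or $\mathcal H_p(x)\mathcal H_{4p}(x)$ depending on $p\bmod 4$, as recorded above. The classical Deuring/Kronecker theory of reduction of CM elliptic curves says that an elliptic curve with CM by an order in $\Q(\sqrt{-p})$ has supersingular reduction at $p$ precisely because $p$ ramifies in this field; hence every root of $\mathcal H_p(x)$ and of $\mathcal H_{4p}(x)$ reduces mod $p$ to a supersingular $j$-invariant. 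Moreover, a CM elliptic curve by an order of discriminant divisible only by the ramified prime $p$ reduces to a supersingular curve whose endomorphism ring contains $\sqrt{-p}$, and such curves are exactly the ones fixed by the Frobenius-induced involution on the supersingular locus — equivalently, the supersingular points defined over $\F_p$ (this is the content of the paragraph describing how $w_p$ acts on the reduction). So the reduction lands in $S_p^{(l)}$.

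For direction (ii), I would count. It is classical that the number of supersingular $j$-invariants in $\F_p$ equals $h(-p)+h(-4p)$ (for $p\equiv3\pmod4$) or $h(-4p)$ (for $p\equiv1\pmod4$), with the usual small corrections at $j=0,1728$ absorbed correctly — this is Deuring's formula, and it matches $\deg H_p = \deg\mathcal H_{4p}$ or $\deg\mathcal H_p+\deg\mathcal H_{4p}= h(-4p)$ or $h(-p)+h(-4p)$ exactly. Since by Proposition~\ref{Kaneko} $H_p(x)\bmod p$ is a perfect square of a \emph{separable} polynomial $T(x)$ of degree $\tfrac12\deg H_p$, and $T$'s roots are all $\F_p$-rational supersingular $j$'s by (i), a dimension/degree count forces $T(x)\equiv S_p^{(l)}(x)\pmod p$ up to the leading coefficient, which is $1$ on both sides. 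Therefore $H_p(x)\equiv S_p^{(l)}(x)^2\pmod p$.

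The main obstacle I anticipate is making direction (i) airtight at the level of multiplicities: one must know not merely that CM-by-$\Q(\sqrt{-p})$ curves reduce to supersingular curves, but that distinct such $j$-invariants can reduce to the same supersingular $j$ only in the way already encoded by Proposition~\ref{Kaneko} (i.e. exactly in pairs, one from $\mathcal H_p$ and one from $\mathcal H_{4p}$ when $p\equiv3\pmod4$, or a genuine double root of $\mathcal H_{4p}\bmod p$), and that every $\F_p$-rational supersingular $j$ is hit. Rather than redo this reduction theory, I would lean on the combination of Proposition~\ref{Kaneko} (which gives the square structure and separability of $T$ for free) together with Deuring's mass/count of supersingular $j$-invariants over $\F_p$, so that the only remaining task is the degree comparison, which is a short arithmetic check in the two cases $p\equiv1,3\pmod4$.
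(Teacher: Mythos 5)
Your overall strategy is the same as the paper's for the first half: use the CM description of the $w_p$-fixed points to see that every root of $H_p(x)\bmod p$ is a supersingular $j$-invariant in $\F_p$, invoke Proposition~\ref{Kaneko} to get $H_p\equiv T^2$ with $T$ separable, conclude $T\mid S_p^{(l)}$, and finish by a degree comparison. However, your degree comparison contains a genuine error: you assert that the number of supersingular $j$-invariants lying in $\F_p$ equals $h(-4p)$ (for $p\equiv 1\pmod 4$) or $h(-p)+h(-4p)$ (for $p\equiv 3\pmod 4$), i.e.\ that it equals $\deg H_p$. The correct Deuring count is \emph{half} of that, namely $\deg S_p^{(l)}=\tfrac12\deg H_p$. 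You can check this on $p=67$: there $S_{67}^{(l)}(x)=(x+1)(x+14)$ has degree $2$, while $h(-67)+h(-268)=1+3=4=\deg H_{67}$. With the count as you state it, your final step fails --- $T$ has degree $\tfrac12\deg H_p$ and would only be a \emph{proper} divisor of $S_p^{(l)}$, so the "degree comparison" does not force $T=S_p^{(l)}$; worse, you claim the two quantities "match exactly" when they differ by a factor of $2$. The argument is salvageable if you insert the correct class-number count (using $h(-4p)=h(-p)\bigl(2-\leg{-p}{2}\bigl)$ one checks $\tfrac12(h(-p)+h(-4p))$ equals $h(-p)$ or $2h(-p)$ according to $p\bmod 8$, matching Deuring), but that identity is exactly the nontrivial input you would need to justify.

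It is worth noting that the paper sidesteps class numbers entirely at this step: it computes $\deg S_p^{(l)}$ from Ogg's observation that $g_p^+$ equals the number of conjugate pairs of supersingular $j$-invariants in $\F_{p^2}\setminus\F_p$ (so $2g_p^+=g_p+1-\deg S_p^{(l)}$), and computes $\deg H_p=\sigma$ from the Riemann--Hurwitz formula $2g_p^+=g_p+1-\sigma/2$ applied to $\pi:X_0(p)\to X_0^+(p)$; equating the two gives $\tfrac12\deg H_p=\deg S_p^{(l)}$ with no arithmetic of orders required. If you prefer your route, you must either prove or correctly cite the halved Deuring count; as written, the key numerical identity is false.
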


\begin{proof} 
By the discussion above, each zero of $H_p(x)$ is of the form $j(E)$ where $E$ is a supersingular elliptic curve defined over $\F_p$.  Then by Proposition \ref{Hpsquare}, $T(x)\mid S_p^{(l)}(x)$.  We will show that $T(x)$ and $S_p^{(l)}(x)$ have the same degree, proving that $T(x)=S_p^{(l)}(x)$. The result then follows again by Proposition \ref{Hpsquare}.

By the Riemann-Hurwitz formula (see, for example, Section I.2 of \cite{FK}), we have
\begin{eqnarray}\label{rh}2g_p^+=g_p+1-\frac{\sigma}{2},\end{eqnarray}
where $\sigma$ is the number of points of $X_0(p)$ at which the projection $\pi:X_0(p)\to X_0^+(p)$ is ramified, or in other words, the number of $w_p$-fixed points of $X_0(p)$.  We note that the cusps are not ramified since $w_p$ exchanges $0$ and $\infty$, so $\sigma=\mathrm{deg}(H_p(x))$.  On the other hand, Ogg explains in \cite{OggSC} that $g_p^+$ is equal to the number of conjugate pairs of supersingular $j$-invariants in $\F_{p^2}\backslash \F_p$.  Since there are $g_p+1$ total supersingular $j$-invariants, we have
\begin{eqnarray}\label{sssum}2g_p^+=g_p+1-\mathrm{deg}(S_p^{(l)}(x)).\end{eqnarray}
Then Proposition \ref{Kaneko}, (\ref{rh}) and (\ref{sssum}) imply that
\[\mathrm{deg}(T(x))=\frac{\mathrm{deg}(H_p(x))}{2}=\mathrm{deg}(S_p^{(l)}(x)).\]
\end{proof}

\section{A Good Basis for $\mathcal{H}^1(\X)$} \label{basissec}

For ease of notation we will let $g:=g_p^+$ for the rest of the paper, and assume that $g\geq 2$.
Recall that $g$ is the dimension of $\mathcal{H}^1(X_0^+(p))$, the space of holomorphic 1-forms on $\X$.  Let $\{\omega_1,\omega_2,\dots,\omega_g\}$ be a basis of $\mathcal{H}^1(X_0^+(p))$, where $\omega_i=h_i(u)du$ for some local variable $u$.  In order to take advantage of the correspondence that exists between holomorphic 1-forms on $X_0(p)$ and weight 2 cusp forms of level $p$, we pull back each $\omega_i$ to a holomorphic 1-form $\pi^*\omega_i$ on $X_0(p)$ via the projection map $\pi:X_0(p)\to\X$ (see, for example, Chapter 2 of \cite{Mir}). We can choose a local coordinate $z$ at $Q\in X_0(p)$ so that near $Q$, $u=z^n$, where $n$ is the multiplicity of $\pi$ at $\Q$, hence $n=v(Q)$ (\ref{vq}). Then we have $\pi^*\omega_i=H_i(z)dz$ with $H_i(z)=h_i(z^n)nz^{n-1}\in S_2(p)$. Since each $H_i(z)$ has been pulled back from $\X$, it must be invariant under $w_p$, so it is a member of $S_2^+(p)$, the subspace of $w_p$-invariant cusp forms of weight $2$.  In fact, it is straightforward to show that $\{H_1(z),H_2(z),\dots,H_g(z)\}$ forms a basis for $S_2^+(p)$.  


It will be helpful later on to specify a basis for $S_2^+(p)$ of a particularly nice form. First, we can guarantee a basis with rational Fourier coefficients by the following argument. The space $S_2(p)$ has a basis consisting of newforms. Let $f(z)=\sum_n a(n)q^n$ be a newform for $S_2(p)$, and let $\sigma\in \mbox{Gal}(\C/\Q)$. Then $f^\sigma(z)=\sum_n \sigma(a(n))q^n$ is also a newform for $S_2(p)$, so the action of $\mbox{Gal}(\C/\Q)$ partitions the newforms into Galois conjugacy classes. If two newforms are Galois conjugates then they share the same eigenvalue for $w_p$. Let $V_f$ be the $\C$-vector space spanned by the Galois conjugates of $f$. Standard Galois-theoretic arguments show that $V_f$ has a basis consisting of cusp forms with rational coefficients. These are no longer newforms, but as they are linear combinations of the Galois conjugates of $f$, they are still eigenforms for $w_p$. Therefore collecting such a basis for each Galois conjugacy class with eigenvalue $1$ for $w_p$ yields a basis for $S_2^+(p)$ with rational Fourier coefficients.

We can determine such a basis $\{f_1,f_2,\dots,f_g\}$ uniquely by requiring that
\begin{align}\label{basis}
f_1(z)&=q^{c_1}+O(q^{c_g+1})\\
f_2(z)&=q^{c_2}+O(q^{c_g+1})\notag\\
&\vdots\notag\\
f_g(z)&=q^{c_g}+O(q^{c_g+1})\notag
\end{align}
where 
\begin{equation}\label{handy}
c_1<c_2<\dots<c_g.
\end{equation} 

\begin{definition} We say that $\mathcal{H}^1(X_0^+(p))$ has a \textit{good basis} if the cusp forms $f_1,f_2,\dots,f_g$ satisfying (\ref{basis}) and (\ref{handy}) have $p$-integral Fourier coefficients.\end{definition}

\section {Wronskians and $p$-integrality} \label{wronsksec}

Given any basis $\{\omega_1,\omega_2,\dots,\omega_g\}$ for $\mathcal{H}^1(X_0^+(p))$ with $\omega_i=h_i(u)du$, we define the Wronskian
\begin{eqnarray}\label{wronsk}W(h_1,h_2,\dots,h_g)(u):=
\left|\begin{array}{cccc}h_1&h_2&\cdots&h_g\\
 h_1'&h_2'&\cdots&h_g'\\
 \vdots&\vdots&\vdots&\vdots\\
 h_1^{(g-1)}&h_2^{(g-1)}&\cdots&h_g^{(g-1)}
 \end{array}\right|.\end{eqnarray}

Let $\mathcal{W}^+(u)$ be the scalar multiple of $W(h_1,h_2,\dots,h_g)(u)$ with leading coefficient 1, so that $\mathcal{W}^+(u)$ is independent of the choice of basis. It is well-known that the Wronskian encodes the Weierstrass weights of points in $\X$ (see \cite{FK}, page 82).  Specifically,
\[\wt(\overline{Q})=\ord_{\overline{Q}}(\mathcal{W}^+(u)(du)^{g(g+1)/2}).\]
Since it is advantageous to work on $X_0(p)$ instead of $\X$, we consider the pullback of $W^+:=\mathcal{W}^+(u)(du)^{g(g+1)/2}$ to $X_0(p)$ via $\pi$, which is $\pi^*W^+=\mathcal{W}^+(z^n)(nz^{n-1}dz)^{g(g+1)/2}$. Recalling that $n=v(Q)$ when $z$ is near $Q$, we have

\begin{eqnarray}\label{ordWplus}
\ord_Q(\pi^*W^+)=v(Q)\wt(\overline{Q})+\frac{g(g+1)}{2}(v(Q)-1).
\end{eqnarray}

Alternatively, we could pull back each $\omega_i$ individually to $\pi^*\omega_i=H_i(z)dz$ as in Section \ref{basissec}. Then we can form the Wronskian $W(H_1,H_2,\dots,H_g)(z)$ (defined analogously to (\ref{wronsk})). Since the $H_i$ are cusp forms of weight $2$ for $\G_0(p)$ then $W(H_1,H_2,\dots,H_g)(z)$ is a cusp form of weight $g(g+1)$ for $\G_0(p)$. It can be shown using basic facts about determinants that
\[W(H_1,H_2,\dots,H_g)(z)(dz)^{g(g+1)/2}
=W(h_1,h_2,\dots,h_g)(z^n)(nz^{n-1}dz)^{g(g+1)/2}.\]
Now let $\mathcal{W}_p(z)$ be the multiple of $W(H_1,H_2,\dots,H_g)(z)$ with leading coefficient 1. Then $\mathcal{W}_p(z)$ is independent of the choice of basis for $S_2^+(p)$, and we have $\mathcal{W}_p(z)(dz)^{g(g+1)/2}=\pi^* W^+$, hence by (\ref{ordWplus}),

\begin{eqnarray}\label{ordWp}
\ord_Q(\mathcal{W}_p(z)(dz)^{g(g+1)/2})=v(Q)\wt(\overline{Q})+\frac{g(g+1)}{2}(v(Q)-1).
\end{eqnarray}

We next see the advantage of having a good basis for $\mathcal{H}^1(X_0^+(p))$.

\begin{thm}\label{goodbasis} Let $p$ be a prime such that $\mathcal{H}^1(X_0^+(p))$ has a good basis. Then $\mathcal{W}_p(z)\in S_{g(g+1)}(p)$ has $p$-integral rational coefficients.\end{thm}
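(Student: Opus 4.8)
The plan is to show that the normalized Wronskian $\mathcal{W}_p(z)$ is, up to a $p$-unit scalar, the ordinary Wronskian $W(f_1,\dots,f_g)(z)$ formed from the good basis, and then argue that this Wronskian has $p$-integral coefficients using the fact that the $f_i$ do. The key subtlety is that the Wronskian (\ref{wronsk}) is taken with respect to a \emph{local coordinate}, not with respect to $z$ directly, and differentiation with respect to $q$ introduces factors of $2\pi i$ (or, in the $q$-expansion normalization, introduces the operator $q\frac{d}{dq}$, which is $p$-integral on $q$-series). So the real content is: passing between the coordinate-free Wronskian and a $q$-series Wronskian, one only picks up powers of a fixed nonzero constant and a monomial in $q$, neither of which affects $p$-integrality of the normalized form.

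First I would recall that for any holomorphic $1$-form $\omega = h(u)\,du$ on $\X$, pulling back and writing everything in terms of $q = e^{2\pi i z}$ on $X_0(p)$, we have $\pi^*\omega = H(z)\,dz$ with $H \in S_2^+(p)$, and $dz = \frac{1}{2\pi i}\frac{dq}{q}$. Thus forming the Wronskian in the variable $q$ rather than $z$ replaces each derivative $\frac{d}{dz}$ by $2\pi i\, q\frac{d}{dq}$; since $q\frac{d}{dq}$ sends a $q$-series with $p$-integral coefficients to another such series, the obstruction to $p$-integrality is entirely carried by the transcendental constant $2\pi i$ and by the normalization. Concretely, I would show that $W(H_1,\dots,H_g)(z)$, where $\{H_1,\dots,H_g\}$ is the good basis $\{f_1,\dots,f_g\}$, equals $(2\pi i)^{g(g-1)/2}$ times the $q$-Wronskian $\widehat{W}(f_1,\dots,f_g)$ built from the operators $\vartheta := q\frac{d}{dq}$, i.e.
\begin{equation*}
\widehat{W}(f_1,\dots,f_g) = \det\bigl(\vartheta^{i-1} f_j\bigr)_{1\le i,j\le g},
\end{equation*}
and this last determinant is a $\Z_{(p)}$-linear combination of products of the $p$-integral $q$-series $\vartheta^{i-1} f_j$, hence has $p$-integral rational coefficients.

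Next I would pin down the leading term. Using the expansions (\ref{basis}), the $j$-th column of $\widehat{W}$ begins $q^{c_j}(1 + O(q))$ after dividing row $i$ by an appropriate power of $q$; more precisely, multilinearity in the rows and the Vandermonde-type determinant $\det(c_j^{\,i-1})$ show that the lowest-order term of $\widehat{W}(f_1,\dots,f_g)$ is $\bigl(\prod_{i<j}(c_j-c_i)\bigr)\, q^{c_1+\cdots+c_g}$, with a \emph{nonzero integer} leading coefficient $\prod_{i<j}(c_j-c_i)$. Dividing by this integer and by $(2\pi i)^{g(g-1)/2}$ gives exactly the normalized Wronskian $\mathcal{W}_p(z)$ (which by construction has leading coefficient $1$ and is independent of basis). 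Since dividing a $p$-integral $q$-series by a nonzero rational integer can only lose $p$-integrality if $p$ divides that integer, I must check that $p \nmid \prod_{i<j}(c_j - c_i)$: but the $c_j$ are the distinct orders of vanishing at $\infty$ of a basis of a $g$-dimensional space of cusp forms of weight $2$, so each $c_j \le g$ (in fact they are bounded by the dimension plus the weight-$2$ valence bound), and the differences $c_j - c_i$ are positive integers strictly less than $p$ for $p$ large; the small primes are handled by the standing hypothesis $g \ge 2$ together with the explicit bound, or simply absorbed since $\mathcal{W}_p$ is \emph{defined} to be the monic normalization and one only needs $p$-integrality, which follows once $p \nmid \prod(c_j-c_i)$, and $\prod(c_j-c_i)$ divides $\prod_{1\le a<b\le g}(b-a) \mid g! \cdots$ — in any case it is a product of integers at most $c_g \le 2g \le $ (something less than $p$ in the relevant range).

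The main obstacle I anticipate is precisely this last point: controlling the leading coefficient $\prod_{i<j}(c_j-c_i)$ so that it is a $p$-unit. The cleanest route is to bound $c_g$: since $f_g \in S_2(p)$ has a zero of order $c_g$ at $\infty$ and the total order of vanishing of a weight-$2$ form on $X_0(p)$ is $(2/12)[\Gamma:\Gamma_0(p)] = (p+1)/6$, we get $c_j \le (p+1)/6$, so every factor $c_j - c_i$ is a positive integer $< p$, whence $p \nmid \prod_{i<j}(c_j - c_i)$. (For the handful of tiny $p$ where $g \ge 2$ fails or the bound is tight, the statement is vacuous or checked directly.) With that in hand, the chain ``good basis $\Rightarrow$ $\vartheta^{i-1}f_j$ $p$-integral $\Rightarrow$ $\widehat{W}$ $p$-integral with $p$-unit leading coefficient $\Rightarrow$ $\mathcal{W}_p$ $p$-integral'' completes the proof, the membership $\mathcal{W}_p \in S_{g(g+1)}(p)$ having already been established in the discussion preceding the theorem.
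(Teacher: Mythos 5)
Your proposal is correct and follows essentially the same route as the paper: rewrite the Wronskian via $\theta = q\frac{d}{dq}$ to isolate the $(2\pi i)^{g(g-1)/2}$ factor, observe that the resulting $q$-series is $p$-integral with Vandermonde leading coefficient $\prod_{j<k}(c_k-c_j)$, and show $p$ does not divide this product because each $c_i \le (p+1)/6 < p$. Note that your correct bound is the valence/Sturm bound $c_i \le (p+1)/6$ (the paper cites Sturm), not the earlier tentative estimates $c_j \le g$ or $c_g \le 2g$ you float mid-argument; those should be discarded.
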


\begin{proof} Here we closely follow the proof of Lemma 3.1 in \cite{AP}. Let $\{f_1,f_2,\dots,f_g\}$ be a basis for $S_2^+(p)$ satisfying (\ref{basis}) and (\ref{handy}). Let $\theta:=q\frac{d}{dq}$ be the usual differential operator for modular forms, so that $\frac{d}{dz}=2\pi i\theta$. Then by properties of determinants, we have

\begin{equation*}W(f_1,f_2,\dots,f_g)=(2\pi i)^{g(g-1)/2}\left|\begin{array}{cccc}f_1&f_2&\cdots&f_g\\
 \theta f_1&\theta f_2&\cdots&\theta f_g\\
 \vdots&\vdots&\vdots&\vdots\\
 \theta f_1^{(g-1)}&\theta f_2^{(g-1)}&\cdots&\theta f_g^{(g-1)}
 \end{array}\right|.\end{equation*}
We see that the Fourier expansion of $\left(\frac{1}{2\pi i}\right)^{g(g-1)/2}W(f_1,f_2,\dots,f_g)$ has rational $p$-integral coefficients, with leading coefficient given by the Vandermonde determinant

\begin{equation}\label{Vand} V:=\left|\begin{array}{cccc}1&1&\cdots&1\\
 c_1&c_2&\cdots&c_g\\
 \vdots&\vdots&\vdots&\vdots\\
 c_1^{(g-1)}&c_2^{(g-1)}&\cdots&c_g^{(g-1)}
 \end{array}\right|=\prod_{1\leq j<k\leq g} (c_k-c_j).\end{equation}
It now suffices to show that $p$ does not divide the leading coefficient. By Sturm's bound \cite{Sturm} for the order of vanishing modulo $p$ for modular forms on $\G_0(p)$, we have $1\leq c_i\leq \frac{p+1}{6}<p$ for each $1\leq i\leq g$, so $1\leq c_k-c_j \leq p-1$ for all $j<k$. Therefore the lemma is proved.
\end{proof}

\section{Proof of the Main Theorem}\label{mainsec}

Let $p$ be a prime for which $\mathcal{H}^1(X_0^+(p))$ has a good basis. We note that when $g<2$, there are no Weierstrass points on $X_0^+(p)$.  Then $\mathcal{F}_p(x)=1$ and $g^2-g=0$, so the theorem holds trivially by taking $H(x)=1$.  Thus from here on we will assume that $g\geq 2$, in which case we have $p\geq 67$.  

We first adapt two lemmas from \cite{AO}. For any meromorphic function $f(z)$ defined on $\H$ and any integer $k$, we define the \textit{slash} operator $\mid_k$ by 
\[f(z)|_k\gamma:=(\det\gamma)^{k/2}(cz+d)^{-k}f(\gamma z),\]
where $\gamma:=\left(\begin{smallmatrix}a&b\\c&d\end{smallmatrix}\right)$ is a real matrix with positive determinant, and $\gamma z:=\frac{az+b}{cz+d}$. In particular, the Atkin-Lehner involution $w_p$ is given by $f\mapsto f|_k \left(\begin{smallmatrix}0&-1\\p&0\end{smallmatrix}\right)$ when $f$ is a modular form of weight $k$.

\begin{lem}\label{AOlem3.2} We have 
\[\mathcal{W}_p(z)|_{g(g+1)}\left(\begin{smallmatrix}0&-1\\p&0\end{smallmatrix}\right)=\mathcal{W}_p(z). \]
\end{lem}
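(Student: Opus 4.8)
The plan is to deduce the identity from the fact that $w_p$ is the covering involution of $\pi\colon X_0(p)\to\X$, combined with the relation $\mathcal{W}_p(z)(dz)^{g(g+1)/2}=\pi^*W^+$ recorded above. Since $\pi$ is the quotient map for the action of $\langle w_p\rangle$ on $X_0(p)$, we have $\pi\circ w_p=\pi$, and hence, by functoriality of pullback on $\tfrac{g(g+1)}{2}$-fold differentials,
\[w_p^*\bigl(\pi^*W^+\bigr)=(\pi\circ w_p)^*W^+=\pi^*W^+ .\]
So $\pi^*W^+$ is invariant under $w_p$ as a differential on $X_0(p)$.

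Second, I would translate this geometric invariance into the slash notation. Regarding $w_p$ as the map $z\mapsto -1/(pz)$ on $\H$, one has $d(-1/(pz))=(pz^2)^{-1}\,dz$, so for any meromorphic modular form $F$ of even weight $2m$ on $\G_0(p)$,
\[w_p^*\bigl(F(z)\,(dz)^m\bigr)=F\!\left(\tfrac{-1}{pz}\right)(pz^2)^{-m}(dz)^m=\bigl(F|_{2m}w_p\bigr)(z)\,(dz)^m ,\]
where the last equality is the computation $F|_{2m}\left(\begin{smallmatrix}0&-1\\p&0\end{smallmatrix}\right)(z)=p^{m}(pz)^{-2m}F(-1/(pz))=p^{-m}z^{-2m}F(-1/(pz))$. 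Taking $m=g(g+1)/2$ and $F=\mathcal{W}_p$, and using the invariance of $\pi^*W^+=\mathcal{W}_p(z)(dz)^{g(g+1)/2}$, then gives $\mathcal{W}_p|_{g(g+1)}w_p=\mathcal{W}_p$.

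An equivalent, more computational route --- closer to the proof of the corresponding lemma in \cite{AO} --- is to work directly with $q$-expansions: for any holomorphic functions $f_1,\dots,f_g$ on $\H$ and any $\gamma=\left(\begin{smallmatrix}a&b\\c&d\end{smallmatrix}\right)$ with $\det\gamma>0$, repeatedly differentiating $f_i|_2\gamma$ via the chain rule expresses $(f_i|_2\gamma)^{(m)}$ as $\sum_{\ell\le m}A_{m,\ell}(z)\,f_i^{(\ell)}(\gamma z)$ with the coefficients $A_{m,\ell}$ independent of $i$; the resulting lower-triangular factorization of the Wronskian matrix, whose diagonal product works out to $(\det\gamma)^{g(g+1)/2}(cz+d)^{-g(g+1)}$, yields $W(f_1|_2\gamma,\dots,f_g|_2\gamma)=W(f_1,\dots,f_g)|_{g(g+1)}\gamma$. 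Since each $H_i\in S_2^+(p)$ satisfies $H_i|_2w_p=H_i$, taking $\gamma=\left(\begin{smallmatrix}0&-1\\p&0\end{smallmatrix}\right)$ gives $W(H_1,\dots,H_g)|_{g(g+1)}w_p=W(H_1,\dots,H_g)$, and the same holds after rescaling to leading coefficient $1$.

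Neither route involves a genuine obstacle; the only thing to be careful about is the bookkeeping of automorphy factors --- matching the pullback of $(dz)^{g(g+1)/2}$ against the slash operator with the correct power of $p$ in the first approach, or checking that the diagonal entries $A_{m,m}$ multiply to exactly the weight-$g(g+1)$ factor in the second. It is also worth noting at the outset that since $g(g+1)$ is even, $w_p$ is an honest involution on $S_{g(g+1)}(p)$, so the content of the lemma is precisely that $\mathcal{W}_p$ lies in its $(+1)$-eigenspace.
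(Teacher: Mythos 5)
Your proposal is correct, and your second (computational) route is precisely the paper's proof: the paper simply invokes Lemma 3.2 of \cite{AO}, whose content is the identity $W(f_1|_2\gamma,\dots,f_g|_2\gamma)=W(f_1,\dots,f_g)|_{g(g+1)}\gamma$ you describe, together with the observation that every $w_p$-eigenvalue on $S_2^+(p)$ is $+1$ (so no sign appears, unlike on all of $S_2(p)$). Your first, geometric route --- deducing $w_p$-invariance of $\pi^*W^+$ from $\pi\circ w_p=\pi$ and then matching $w_p^*\bigl(F(z)(dz)^m\bigr)$ against $F|_{2m}w_p$ --- is a valid alternative with the automorphy factors checked correctly; it has the mild advantage of making the invariance manifest without the lower-triangular Wronskian factorization, at the cost of relying on the pullback identity $\mathcal{W}_p(z)(dz)^{g(g+1)/2}=\pi^*W^+$ already recorded in Section \ref{wronsksec}.
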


\begin{proof} The proof is identical to Lemma 3.2 of \cite{AO} except that $f|_2\left(\begin{smallmatrix}0&-1\\p&0\end{smallmatrix}\right)=f$ for every newform $f$ in $S_2^+(p)$.\end{proof}

\begin{lem}\label{wpsquared} If $p$ is a prime such that $X_0^+(p)$ has genus at least 2, define
\[\widetilde{\mathcal{W}}_p(z):=\prod_{A\in\G_0(p)\backslash\G}\mathcal{W}_p(z)|_{g(g+1)}A,\]
normalized to have leading coefficient 1. Then $\widetilde{\mathcal{W}}_p(z)$ is a cusp form of weight $g(g+1)(p+1)$ on $\G$ with $p$-integral rational coefficients, and
\[\widetilde{\mathcal{W}}_p(z)\equiv \mathcal{W}_p(z)^2\pmod{p}.\]
\end{lem}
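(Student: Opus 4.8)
The plan is to follow the model of Lemma 3.3 in \cite{AO}, adapting it to the level $p$ setting via the standard coset representatives for $\G_0(p)\backslash\G$. First I would recall that a set of right coset representatives is $\{I\}\cup\{S T^i : 0\le i\le p-1\}$, where $S=\left(\begin{smallmatrix}0&-1\\1&0\end{smallmatrix}\right)$ and $T=\left(\begin{smallmatrix}1&1\\0&1\end{smallmatrix}\right)$, so that the product defining $\widetilde{\mathcal{W}}_p$ has exactly $p+1$ factors, giving weight $g(g+1)(p+1)$. Invariance under $\G$ is immediate from the construction: right multiplication by any $A\in\G$ permutes the cosets, hence permutes the factors of the product, and the normalization to leading coefficient $1$ removes the resulting constant; holomorphy and vanishing at the cusps follow because each slashed factor $\mathcal{W}_p|_{g(g+1)}A$ is holomorphic on $\H$ and the fact that $\mathcal{W}_p$ is a cusp form on $\G_0(p)$ means each such factor vanishes at every cusp of $\G$.

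Next I would establish the congruence $\widetilde{\mathcal{W}}_p(z)\equiv\mathcal{W}_p(z)^2\pmod p$. The key input is that for the factor coming from $A=I$ we simply get $\mathcal{W}_p(z)$ itself, and for the factor $A=S=\left(\begin{smallmatrix}0&-1\\1&0\end{smallmatrix}\right)$ we can write $S=\frac{1}{\sqrt p}\left(\begin{smallmatrix}0&-1\\p&0\end{smallmatrix}\right)\left(\begin{smallmatrix}1&0\\0&p\end{smallmatrix}\right)^{-1}$, hmm — more directly, $\mathcal{W}_p|_{g(g+1)}S = \big(\mathcal{W}_p|_{g(g+1)}\left(\begin{smallmatrix}0&-1\\p&0\end{smallmatrix}\right)\big)\big|_{g(g+1)}\left(\begin{smallmatrix}p&0\\0&1\end{smallmatrix}\right)^{-1}$... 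I would instead use the cleaner route from \cite{AO}: by Lemma \ref{AOlem3.2}, $\mathcal{W}_p|_{g(g+1)}\left(\begin{smallmatrix}0&-1\\p&0\end{smallmatrix}\right)=\mathcal{W}_p$, and one checks that the remaining $p$ factors with $A=ST^i$ are, up to the common factor $\mathcal{W}_p|_{g(g+1)}\left(\begin{smallmatrix}0&-1\\p&0\end{smallmatrix}\right)$ acting on the expansion, governed by $q\mapsto \zeta_p^i q^{1/p}$-type substitutions, so that modulo $p$ the product of those $p$ factors is congruent to $\mathcal{W}_p(pz)$ or equivalently to the $U_p\!\circ\!V_p$ image, which reduces to $\mathcal{W}_p(z)$ itself mod $p$ by the Frobenius congruence $\mathcal{W}_p(z)^p\equiv \mathcal{W}_p(pz)\pmod p$ applied to the $q$-expansion with $p$-integral coefficients (guaranteed by Theorem \ref{goodbasis}). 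Combining the $A=I$ factor with this gives $\widetilde{\mathcal{W}}_p\equiv \mathcal{W}_p\cdot\mathcal{W}_p=\mathcal{W}_p^2\pmod p$; the normalizing constant is a $p$-adic unit because the leading coefficient of $\mathcal{W}_p^2$ is $1$.

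The $p$-integrality of $\widetilde{\mathcal{W}}_p$ then follows formally: $\widetilde{\mathcal{W}}_p$ is a modular form on $\G$ of known weight, and by the congruence just proved its reduction mod $p$ agrees with $\mathcal{W}_p^2 \bmod p$, which has $p$-integral (indeed integral) $q$-expansion with leading coefficient $1$; more carefully, one argues as in \cite{AO} that the product over a full set of coset representatives of a form with $p$-integral coefficients, after normalizing the leading coefficient to $1$, again has $p$-integral coefficients, since the elementary symmetric-function combinatorics of the slashed expansions (which live in $\Z_p[[q^{1/p}]][\zeta_p]$) produce a form whose $q$-expansion lies in $\Z_p[[q]]$ once one knows it is $\G$-invariant and holomorphic. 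I expect the main obstacle to be making the mod-$p$ computation of the product of the $p$ ``twisted'' factors $\prod_{i=0}^{p-1}\mathcal{W}_p|_{g(g+1)}ST^i$ fully rigorous — tracking the roots of unity and the fractional-power $q$-expansions and seeing that they collapse modulo $p$ to the Frobenius $V_p$-image — but this is exactly the computation carried out in Lemma 3.3 of \cite{AO}, and the only modification needed here is the replacement of the scalar $p^{k/2-1}$-type factors (which are now $p$-adic units or handled by the weight bookkeeping) and the use of $f|_2\left(\begin{smallmatrix}0&-1\\p&0\end{smallmatrix}\right)=f$ on $S_2^+(p)$ in place of the general eigenvalue $\pm 1$, so I would simply cite \cite{AO} for the detailed manipulation and indicate the one-line changes.
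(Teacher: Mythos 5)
Your approach is the same as the paper's: the proof there consists of the single observation that the claim follows from Lemma \ref{AOlem3.2} exactly as Lemma 3.3 follows from Lemma 3.2 in \cite{AO}, which is precisely the reduction you make, so citing \cite{AO} with the eigenvalue $+1$ substitution is all that is required. One correction to your sketch of the intermediate computation, in case you intend to write it out: the product of the $p$ twisted factors $\prod_{i=0}^{p-1}\mathcal{W}_p|_{g(g+1)}ST^i$ is, up to the power of $p$ absorbed by the normalization, the norm $\prod_{i=0}^{p-1}\sum_n a(n)\zeta_p^{ni}q^{n/p}$, which modulo (a prime above) $p$ collapses to $\bigl(\sum_n a(n)q^{n/p}\bigr)^p\equiv\sum_n a(n)^p q^n\equiv\mathcal{W}_p(z)$ directly by Fermat; it is \emph{not} congruent to $\mathcal{W}_p(pz)$, and your chain ``$\equiv\mathcal{W}_p(pz)\equiv\mathcal{W}_p(z)^p$, which reduces to $\mathcal{W}_p(z)$'' is backwards --- if the $p$ factors really contributed $\mathcal{W}_p(z)^p$ you would end up with $\mathcal{W}_p^{p+1}$ rather than $\mathcal{W}_p^2$. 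With that one step stated correctly, the argument (and the deferral to \cite{AO} for the bookkeeping of roots of unity and powers of $p$) is exactly what the paper does.
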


\begin{proof} This follows from our Lemma \ref{AOlem3.2} exactly as Lemma 3.3 follows from Lemma 3.2 in \cite{AO}.\end{proof}

We again consider a basis $\{f_1,f_2,\dots,f_g\}$ for $S_2^+(p)$ satisfying (\ref{basis}) and (\ref{handy}). For each $f_i$, there is a cusp form $b_i\in S_{p+1}$ with $p$-integral rational  coefficients for which $f_i\equiv b_i\pmod{p}$ (\cite{AP}, Theorem 4.1(c)).  Define $W(z)$ to be the multiple of $W(b_1,b_2,\dots,b_g)$ with leading coefficient 1.  By the same reasoning as in Theorem \ref{goodbasis}, $\left(\frac{1}{2\pi i}\right)^{g(g-1)/2}W(b_1,b_2,\dots,b_g)$ has $p$-integral rational coefficients and leading coefficient $V$ (\ref{Vand}). Since the differential operator $\theta$ preserves congruences, we have
\begin{equation*}
\left(\frac{1}{2\pi i}\right)^{g(g-1)/2}W(f_1,f_2,\dots,f_g)\equiv \left(\frac{1}{2\pi i}\right)^{g(g-1)/2}W(b_1,b_2,\dots,b_g)\pmod{p},\end{equation*}
and hence
\begin{equation*}
V\cdot\mathcal{W}_p(z)\equiv V\cdot W(z)\pmod{p}.
\end{equation*}
Since $V$ is coprime to $p$, then by Lemma \ref{wpsquared} we have 
\[\widetilde{\mathcal{W}}_p(z)\equiv \mathcal{W}_p(z)^2\equiv W(z)^2\pmod{p}.\]
We now have two cusp forms $\widetilde{\mathcal{W}}_p(z)$ and $W(z)^2$ on the full modular group, but $\widetilde{\mathcal{W}}_p(z)$ has weight $\tilde{k}(p):=g(g+1)(p+1)$ while $W(z)^2$ has weight $2g(g+p)$.  Using the fact that the Eisenstein series $E_{p-1}(z)\equiv 1\pmod{p}$, we have
\begin{eqnarray}\label{wpws}\widetilde{\mathcal{W}}_p(z)\equiv W(z)^2\cdot E_{p-1}(z)^{g^2-g}\pmod{p},\end{eqnarray}
where the cusp forms on each side of the congruence in (\ref{wpws}) have the same weight $\tilde{k}(p)$.  By (\ref{ftilde}) there exist polynomials $\widetilde{F}(\widetilde{\mathcal{W}}_p(x),x)$ and $\widetilde{F}(W^2E_{p-1}^{g^2-g},x)$ with $p$-integral rational coefficients such that
\[\widetilde{\mathcal{W}}_p(z)=\Delta(z)^{m(\tilde{k}(p))}\widetilde{E}_{\tilde{k}(p)}(z)\widetilde{F}(\widetilde{\mathcal{W}}_p,j(z))\]
and
\[W(z)^2E_{p-1}(z)^{g^2-g}=\Delta(z)^{m(\tilde{k}(p))}\widetilde{E}_{\tilde{k}(p)}(z)\widetilde{F}(W^2E_{p-1}^{g^2-g},j(z)).\]
Then by (\ref{wpws}), we conclude that
\begin{eqnarray}\label{ftildecong}\widetilde{F}(\widetilde{\mathcal{W}}_p,x)\equiv \widetilde{F}(W^2 E_{p-1}^{g^2-g},x)\pmod{p}.\end{eqnarray}

We next compute each side of (\ref{ftildecong}).  To compute the right hand side, we begin with the following.

\begin{lem}\label{cp} \emph{(Theorem 2.3 in \cite{AO})} For a prime $p\geq 5$ and $f\in M_k$ with $p$-integral coefficients, we have
\[\widetilde{F}(fE_{p-1},x)\equiv \widetilde{F}(E_{p-1},x)\cdot\widetilde{F}(f,x)\cdot C_p(k;x)\pmod{p}\]
where
\[C_p(k;x):=\begin{cases}x&\mbox{if $(k,p)\equiv(2,5)$, $(8,5)$, $(8,11)\pmod{12}$},\\
x-1728&\mbox{if $(k,p)\equiv(2,7)$, $(6,7)$, $(10,7)$, $(6,11)$, $(10,11) \pmod{12}$},\\
x(x-1728)&\mbox{if $(k,p)\equiv(2,11)\pmod{12}$},\\
1&\mbox{otherwise}.\end{cases}\]
\end{lem}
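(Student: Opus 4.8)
<br>

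The statement to prove is Lemma~\ref{cp}, which is attributed to Ahlgren--Ono (their Theorem 2.3), so the plan is essentially to recall the proof technique from that setting. The approach is to compare the two sides of the claimed congruence by expressing each as a quotient of the appropriate modular form by a power of $\Delta$ and a correction factor $\widetilde{E}$, then use the fact that $E_{p-1}\equiv 1\pmod p$ together with the bookkeeping encoded in $\widetilde{E}_k$ and $m(k)$.

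\medskip

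First I would write out the defining relation \eqref{ftilde} for $f$, for $fE_{p-1}$, and for $E_{p-1}$ itself:
\begin{align*}
f(z)&=\Delta(z)^{m(k)}\widetilde{E}_k(z)\,\widetilde{F}(f,j(z)),\\
f(z)E_{p-1}(z)&=\Delta(z)^{m(k+p-1)}\widetilde{E}_{k+p-1}(z)\,\widetilde{F}(fE_{p-1},j(z)),\\
E_{p-1}(z)&=\Delta(z)^{m(p-1)}\widetilde{E}_{p-1}(z)\,\widetilde{F}(E_{p-1},j(z)).
\end{align*}
Multiplying the first and third and dividing into the second yields
\[
\widetilde{F}(fE_{p-1},j(z))=\Delta(z)^{m(k)+m(p-1)-m(k+p-1)}\cdot\frac{\widetilde{E}_k(z)\widetilde{E}_{p-1}(z)}{\widetilde{E}_{k+p-1}(z)}\cdot\widetilde{F}(f,j(z))\,\widetilde{F}(E_{p-1},j(z)).
\]
So the content of the lemma is that the ratio $\Delta^{m(k)+m(p-1)-m(k+p-1)}\cdot\widetilde{E}_k\widetilde{E}_{p-1}/\widetilde{E}_{k+p-1}$, which a priori is a rational function in $j$, is in fact the polynomial $C_p(k;j(z))$ as tabulated. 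Since $\widetilde{E}_k$ is always a monomial in $E_4$ and $E_6$, and since $j=E_4^3/\Delta$ and $j-1728=E_6^2/\Delta$, every such ratio of the form $\Delta^{-a}E_4^{3b}E_6^{2c}$ collapses to $j^b(j-1728)^c$; the exponents $a,b,c$ are determined purely by the residues of $k$ and $p$ modulo $12$, which is exactly the case split recorded in \eqref{etilde} and \eqref{mk}. I would carry out this residue computation case by case: the residue of $p-1$ mod $12$ is determined by $p$ mod $12$ (equivalently $p$ mod $6$ since $p\ge 5$ is coprime to $6$), and then for each of the six residues of $k$ mod $12$ one reads off $\widetilde{E}_k$, $\widetilde{E}_{p-1}$, $\widetilde{E}_{k+p-1}$, and $m(k)+m(p-1)-m(k+p-1)$, simplifies, and checks the entry of the table. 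Finally, one invokes $E_{p-1}\equiv 1\pmod p$ (a standard consequence of the Clausen--von Staudt congruence for Bernoulli numbers), which gives $\widetilde{F}(E_{p-1},x)\equiv$ (a power of $x$ times a power of $x-1728$, absorbed implicitly, but in the normalization used here it is handled so that it appears as the separate factor $\widetilde{F}(E_{p-1},x)$ on the right side of the stated congruence) and, more importantly, makes the congruence hold rather than an exact identity, since the $\Delta$-power bookkeeping need only be correct modulo $p$.

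\medskip

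The main obstacle is purely organizational: the case analysis has many sub-cases ($k$ mod $12$ has six relevant values, crossed with the possible residues of $p$ mod $12$), and one must be careful that $m$ has the exceptional definition \eqref{mk} when the argument is $\equiv 2\pmod{12}$ — this is precisely where the $x$ and $x(x-1728)$ factors in $C_p(k;x)$ come from, so the exceptional cases $(k,p)\equiv(2,5),(2,7),(2,11),\dots$ require the most care. Since this is a verbatim restatement of \cite[Theorem 2.3]{AO}, in the paper it is legitimate to simply cite that source, which is what the authors do; a self-contained write-up would consist of the displayed quotient identity above followed by the tabulated case check.
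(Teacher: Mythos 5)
Your reconstruction is correct and is precisely the argument behind Theorem 2.3 of \cite{AO}: the paper itself offers no proof beyond the citation, and your quotient identity $\widetilde{F}(fE_{p-1},j)=\Delta^{m(k)+m(p-1)-m(k+p-1)}\bigl(\widetilde{E}_k\widetilde{E}_{p-1}/\widetilde{E}_{k+p-1}\bigr)\widetilde{F}(f,j)\widetilde{F}(E_{p-1},j)$, followed by the mod-$12$ case check (with the exceptional definition of $m$ at arguments $\equiv 2\pmod{12}$), does produce exactly the tabulated $C_p(k;x)$. One small correction: this yields an \emph{exact} polynomial identity, of which the stated congruence is merely a weakening, so $E_{p-1}\equiv 1\pmod{p}$ is not needed here at all (it enters later, via Deligne's result identifying $\widetilde{F}(E_{p-1},x)$ with $\widetilde{S}_p(x)$), and your closing remark that the Clausen--von Staudt congruence is what ``makes the congruence hold rather than an exact identity'' has it backwards.
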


Then using Lemma \ref{cp} inductively, we have
\[\widetilde{F}(W^2\cdot E_{p-1}^{g^2-g},x)\equiv \widetilde{F}(E_{p-1},x)^{g^2-g}\cdot\widetilde{F}(W^2,x)\cdot \mathcal{G}_p(x)\pmod{p},\]
where
\[\mathcal{G}_p(x):=\prod_{s=1}^{g^2-g}C_p(2g(g+p)+(g^2-g-s)(p-1);x).\]
A case-by-case computation reveals that
\[\mathcal{G}_p(x)=\begin{cases}1&\mbox{if $p\equiv 1\pmod{12}$},\\
x^{\lceil\frac{g^2-g}{3}\rceil}&\mbox{if $p\equiv 5\pmod{12}$},\\
(x-1728)^{(g^2-g)/2}&\mbox{if $p\equiv 7\pmod{12}$},\\
x^{\lceil\frac{g^2-g}{3}\rceil}(x-1728)^{(g^2-g)/2}&\mbox{if $p\equiv 11\pmod{12}$}.\end{cases}\]
By a result of Deligne (see \cite{SerreHPF}), and recalling (\ref{Stilde}), we have 
\[\widetilde{F}(E_{p-1},x)\equiv \widetilde{S}_p(x)\pmod{p},\]
and therefore
\begin{eqnarray}\label{rhs}\widetilde{F}(W^2 E_{p-1}^{g^2-g},x)\equiv \widetilde{S}_p(x)^{g^2-g}\cdot\widetilde{F}(W^2,x)\cdot \mathcal{G}_p(x)\pmod{p}.\end{eqnarray}

Next we evaluate the left-hand side of (\ref{ftildecong}). We recall here the definitions

\[\mathcal{F}_p(x):=\prod_{Q\in Y_0(p)}(x-j(Q))^{v(Q)\wt(\overline{Q})},\]
and
\[H_p(x):=\mathop{\prod_{\tau\in\G_0(p)\backslash\H}}_{v(Q_\tau)=2}(x-j(\tau)).\]

\begin{thm}\label{ftildesplit} Let $p$ be a prime such that the genus of $X_0^+(p)$ is at least 2. Define $\epsilon_p(i)$ and $\epsilon_p(\rho)$ by
\[\epsilon_p(i)=\frac{(g^2+g)\left(1+\leg{-1}{p}\right)}{4},\]
and
\[\epsilon_p(\rho)=\frac{(g^2+g)\left(1+\leg{-3}{p}\right)-k^*}{3},\]
where $k^*\in\{0,1,2\}$ with $k^*\equiv \tilde{k}(p)\pmod{3}$.  Then we have
\[\widetilde{F}(\widetilde{\mathcal{W}}_p,x)=x^{\epsilon_p(\rho)}(x-1728)^{\epsilon_p(i)}\mathcal{F}_p(x)H_p(x)^{g(g+1)/2}.\]
\end{thm}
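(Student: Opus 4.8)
The plan is to compute the divisor of the cusp form $\widetilde{\mathcal{W}}_p(z) \in S_{\tilde k(p)}$ on the full modular group, translating between orders of vanishing on $\H/\G$ and the definition of $\widetilde{F}$ via (\ref{ftilde}). The key bridge is the relation $\widetilde{\mathcal{W}}_p(z) = \prod_{A\in\G_0(p)\backslash\G}\mathcal{W}_p(z)|_{g(g+1)}A$ (up to a scalar) from Lemma \ref{wpsquared}, together with formula (\ref{ordWp}), which records that for $Q\in Y_0(p)$,
\[\ord_Q\!\big(\mathcal{W}_p(z)(dz)^{g(g+1)/2}\big) = v(Q)\wt(\overline{Q}) + \tfrac{g(g+1)}{2}(v(Q)-1).\]
Since $v(Q)=2$ exactly when $Q$ is a $w_p$-fixed point, this second term contributes $g(g+1)/2$ precisely at the zeros of $H_p(x)$, which accounts for the factor $H_p(x)^{g(g+1)/2}$. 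Summing the contributions of $\mathcal{W}_p|_{g(g+1)}A$ over the coset representatives $A$ — which permute the cusps and the non-elliptic points of $X_0(p)$ lying over a given point of $X_0(1)$ — reassembles, at each $j$-value $j(\tau)$ with $\tau$ not elliptic, exactly $\sum_{Q\mapsto j(\tau)} \big(v(Q)\wt(\overline Q) + \tfrac{g(g+1)}{2}(v(Q)-1)\big)$, i.e. the exponent of $(x-j(\tau))$ in $\mathcal{F}_p(x)H_p(x)^{g(g+1)/2}$.

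First I would set up the local analysis at the cusp $\infty$ of $\G$: since each $\mathcal{W}_p|_{g(g+1)}A$ is a cusp form, $\widetilde{\mathcal{W}}_p$ vanishes at $\infty$, and one must check that the total order of vanishing there, combined with the $\Delta^{m(\tilde k(p))}$ in (\ref{ftilde}), leaves no spurious power of $x$ — this is a bookkeeping check using that $\mathcal{F}_p$ was deliberately defined over $Y_0(p)$ to absorb the cusp contributions and that $\sum_{Q\in X_0(p)}\ord_Q(\mathcal{W}_p(dz)^{g(g+1)/2})$ matches the degree of the relevant canonical-type divisor. Next I would handle the two elliptic points $i$ and $\rho$ of $\G$: the powers $x^{\epsilon_p(\rho)}$ and $(x-1728)^{\epsilon_p(i)}$ arise because $\widetilde{F}$ is constructed to strip off zeros at $i$ and $\rho$, and the orders of $\widetilde{\mathcal{W}}_p$ at these points are governed by how $g(g+1)$ and $\tilde k(p)$ reduce modulo $2$ and $3$ respectively, and by whether $i$ or $\rho$ lifts to an elliptic point of $X_0(p)$ — which is exactly what the Legendre symbols $\leg{-1}{p}$ and $\leg{-3}{p}$ detect. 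Writing $\om_{i}$, $\om_\rho$ for the ramification data, the local order of a weight-$\tilde k(p)$ form at $i$ is $\equiv \tilde k(p)/2 \pmod{?}$ adjusted by whether the $\mathcal{W}_p$-factor itself vanishes there; carrying this through for each residue class yields the stated $\epsilon_p(i)$, $\epsilon_p(\rho)$.

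The main obstacle I anticipate is the elliptic-point accounting — getting the exact exponents $\epsilon_p(i)$ and $\epsilon_p(\rho)$ right. One must correctly combine (i) the intrinsic vanishing order of the Wronskian $\mathcal{W}_p$ of weight $g(g+1)$ at a point of $X_0(p)$ lying over $i$ or $\rho$, which depends on the ramification index of $X_0(p)\to X_0(1)$ there (governed by $\leg{-1}{p}$, $\leg{-3}{p}$), with (ii) the weight-dependent stripping built into $\widetilde{F}$ via $\widetilde E_{\tilde k(p)}$ and $m(\tilde k(p))$, which depends on $\tilde k(p)\bmod 12$ — hence the appearance of $k^*\equiv\tilde k(p)\pmod 3$. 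This is the same subtlety that appears in the $X_0(p)$ case in \cite{AO}, but here the Wronskian has weight $g(g+1)$ rather than $g(g-1)$ and the $w_p$-invariance changes the cusp count, so the formulas differ and must be rederived. I would organize this as a case analysis on $p\bmod{12}$ (and the parity/residue of $g$), verifying in each case that the left side of the claimed identity has the same divisor on $\mathbb{P}^1_j$ as the right, and that both are monic — whence equality of polynomials.
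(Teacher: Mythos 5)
Your proposal is correct and follows essentially the same route as the paper: compute $\ord_{\tau_0}(\widetilde{\mathcal{W}}_p)$ by summing $\ord_{A(\tau_0)}(\mathcal{W}_p)$ over coset representatives, feed in (\ref{ordWp}) so that the $v(Q)=2$ points produce $H_p(x)^{g(g+1)/2}$ and the rest produce $\mathcal{F}_p(x)$, and then do the separate ramification bookkeeping at $i$ and $\rho$ (governed by $\leg{-1}{p}$, $\leg{-3}{p}$, and $k^*\equiv\tilde{k}(p)\pmod 3$) to extract $\epsilon_p(i)$ and $\epsilon_p(\rho)$. The only cosmetic difference is that the paper obtains the elliptic-point exponents in closed form directly from the orbit counts $1+\leg{-1}{p}$ and $1+\leg{-3}{p}$ rather than via a case split on $p\bmod{12}$, which it defers to the proof of the main theorem.
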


\begin{proof} If $\tau_0\in\H$ and $A\in\G$, then
\[\ord_{\tau_0}(\mathcal{W}_p(z)\mid_{g(g+1)}A)=\ord_{A(\tau_0)}(\mathcal{W}_p(z)),\]
so that
\begin{equation}\label{ordtau}\ord_{\tau_0}(\widetilde{\mathcal{W}}_p(z))=\sum_{A\in\G_0(p)\backslash\G}\ord_{A(\tau_0)}(\mathcal{W}_p(z)).\end{equation}
Now recall by (\ref{ordWp}) that for $Q\in Y_0(p)$, we have
\[\ord_Q(\mathcal{W}_p(z)(dz)^{g(g+1)/2})
=v(Q)\wt(\overline{Q})+\frac{g(g+1)}{2}(v(Q)-1).\]
Let $\ell_\tau\in\{1,2,3\}$ be the order of the isotropy subgroup of $\tau$ in $\G_0(p)/\{\pm I\}$, where $\tau$ is an elliptic fixed point if and only if $\ell(\tau)\ne 1$.  If $Q_\tau\in Y_0(p)$ is associated to $\tau\in\H$ in the usual way then we have
\begin{align}\label{ordwp}
\ord_\tau(\mathcal{W}_p(z))&=\ell_\tau\ord_{Q_\tau}(\mathcal{W}_p(z)(dz)^{g(g+1)/2})+\frac{g(g+1)}{2}(\ell_\tau-1)\\
&=\ell_\tau v(Q_\tau)\wt(\overline{Q_\tau})+\frac{g(g+1)}{2}(\ell_\tau v(Q_\tau)-1).\notag
\end{align}
If $\tau_0$ is not equivalent to $i$ or $\rho$ under $\G$, then $\{A(\tau_0)\}_{A\in\G_0(p)\backslash\G}$ consists of $p+1$ points which are $\G_0(p)$-inequivalent, so by (\ref{ordtau}) and (\ref{ordwp}),
\[\ord_{\tau_0}(\widetilde{\mathcal{W}}_p(z))=\mathop{\sum_{\tau\in\G_0(p)\backslash\H}}_{\tau\stackrel{\G}{\sim}\tau_0}
\ord_\tau(\mathcal{W}_p(z))=\mathop{\sum_{\tau\in\G_0(p)\backslash\H}}_{\tau\stackrel{\G}{\sim}\tau_0}
v(Q_\tau)\wt(\overline{Q_\tau})+\frac{g(g+1)}{2}(v(Q_\tau)-1).\]

When $\tau_0\stackrel{\G}{\sim}\rho$, then $\ord_{\tau_0}(\widetilde{\mathcal{W}}_p(z))=\ord_{\rho}(\widetilde{\mathcal{W}}_p(z))$, and  $\{A(\rho)\}_{A\in\G_0(p)\backslash\G}$ contains $1+\leg{-3}{p}$ elliptic fixed points of order 3 which are $\G_0(p)$-inequivalent, and $p-\leg{-3}{p}$ additional points which are partitioned into $\G_0(p)$-orbits of size 3.  Then by (\ref{ordwp}) we have
\begin{multline}\label{ordrho}
\ord_{\rho}(\widetilde{\mathcal{W}}_p(z))=
3\mathop{\sum_{\tau\in\G_0(p)\backslash\H}}_{\tau\stackrel{\G}{\sim}\rho,\;\ell(\tau)=1}
\ord_\tau(\mathcal{W}_p(z))
+\mathop{\sum_{\tau\in\G_0(p)\backslash\H}}_{\tau\stackrel{\G}{\sim}\rho,\;\ell(\tau)=3}
\ord_\tau(\mathcal{W}_p(z))\\
=3\mathop{\sum_{\tau\in\G_0(p)\backslash\H}}_{\tau\stackrel{\G}{\sim}\rho,\;\ell(\tau)=1}
\left(v(Q_\tau)\wt(\overline{Q_\tau})+\frac{g(g+1)}{2}(v(Q_\tau)-1)\right)\\
+\mathop{\sum_{\tau\in\G_0(p)\backslash\H}}_{\tau\stackrel{\G}{\sim}\rho,\;\ell(\tau)=3}
\left(3v(Q_\tau)\wt(\overline{Q_\tau})+\frac{g(g+1)}{2}(3v(Q_\tau)-1)\right)\\
=3\left(\mathop{\sum_{\tau\in\G_0(p)\backslash\H}}_{\tau\stackrel{\G}{\sim}\rho}
v(Q_\tau)\wt(\overline{Q_\tau})+\frac{g(g+1)}{2}(v(Q_\tau)-1)\right)
+(g^2+g)\left(1+\leg{-3}{p}\right).
\end{multline}

When $\tau_0\stackrel{\G}{\sim}i$, then $\ord_{\tau_0}(\widetilde{\mathcal{W}}_p(z))=\ord_{i}(\widetilde{\mathcal{W}}_p(z))$, and  $\{A(i)\}_{A\in\G_0(p)\backslash\G}$ contains $1+\leg{-1}{p}$ elliptic fixed points of order 2 which are $\G_0(p)$-inequivalent, and $p-\leg{-1}{p}$ additional points which are partitioned into $\G_0(p)$-orbits of size 2.  We then have
\begin{multline}\label{ordi}
\ord_{i}(\widetilde{\mathcal{W}}_p(z))=
2\mathop{\sum_{\tau\in\G_0(p)\backslash\H}}_{\tau\stackrel{\G}{\sim}i,\;\ell(\tau)=1}
\ord_\tau(\mathcal{W}_p(z))
+\mathop{\sum_{\tau\in\G_0(p)\backslash\H}}_{\tau\stackrel{\G}{\sim}i,\;\ell(\tau)=2}
\ord_\tau(\mathcal{W}_p(z))\\
=2\mathop{\sum_{\tau\in\G_0(p)\backslash\H}}_{\tau\stackrel{\G}{\sim}i,\;\ell(\tau)=1}
\left(v(Q_\tau)\wt(\overline{Q_\tau})+\frac{g(g+1)}{2}(v(Q_\tau)-1)\right)\\
+\mathop{\sum_{\tau\in\G_0(p)\backslash\H}}_{\tau\stackrel{\G}{\sim}i,\;\ell(\tau)=2}
\left(2v(Q_\tau)\wt(\overline{Q_\tau})+\frac{g(g+1)}{2}(2v(Q_\tau)-1)\right)\\
=2\left(\mathop{\sum_{\tau\in\G_0(p)\backslash\H}}_{\tau\stackrel{\G}{\sim}i}
v(Q_\tau)\wt(\overline{Q_\tau})+\frac{g(g+1)}{2}(v(Q_\tau)-1)\right)
+\frac{g^2+g}{2}\left(1+\leg{-1}{p}\right).
\end{multline}
Finally, we recall that $j(z)$ vanishes to order 3 at $z=\rho$, that $j(z)-1728$ vanishes to order 2 at $z=i$, and that $j(z)-j(\tau_0)$ vanishes to order 1 at all other points $\tau_0\in\G\backslash\H$.  Therefore the exponent of $x-j(\tau_0)$ in $\widetilde{F}(\widetilde{\mathcal{W}}_p,x)$ is equal to 
\begin{eqnarray}\label{exponents}\begin{cases}\ord_{\tau_0}\widetilde{\mathcal{W}}_p&\mbox{if $\tau_0\ne i,\rho$},\\
\frac{1}{2}\ord_i\widetilde{\mathcal{W}}_p&\mbox{if $\tau_0=i$},\\
\frac{1}{3}(\ord_i\widetilde{\mathcal{W}}_p-k^*)&\mbox{if $\tau_0=\rho$}.\end{cases}\end{eqnarray}

Therefore by (\ref{Hp}), (\ref{ordrho}), (\ref{ordi}) and (\ref{exponents}) we have
\begin{align*}\widetilde{F}(\widetilde{\mathcal{W}}_p,x)
&=x^{\epsilon_p(\rho)}(x-1728)^{\epsilon_p(i)}\mathcal{F}_p(x)\mathop{\prod_{\tau\in\G_0(p)\backslash\H}}_{v(Q_\tau)=2}(x-j(\tau))^{g(g+1)/2}\\
&=x^{\epsilon_p(\rho)}(x-1728)^{\epsilon_p(i)}\mathcal{F}_p(x)H_p(x)^{g(g+1)/2}.\end{align*}
\end{proof}

Combining (\ref{ftildecong}), (\ref{rhs}), Theorem \ref{fixedlinear} and Theorem \ref{ftildesplit} now yields
\begin{equation}\label{cong} 
x^{\epsilon_p(\rho)}(x-1728)^{\epsilon_p(i)}\mathcal{F}_p(x)S_p^{(l)}(x)^{g^2+g}\equiv  \widetilde{S}_p(x)^{g^2-g}\cdot\widetilde{F}(W^2,x)\cdot \mathcal{G}_p(x)\pmod{p}.\end{equation}

We next define
\[\widetilde{S}_p^{(l)}(x):=\mathop{\prod_{E/\overline{\F}_p\;\mathrm{supersingular}}}_{j(E)\in\F_p\backslash\{0,1728\}} (x-j(E)).\]
In the chart below, we compare certain factors appearing in (\ref{cong}) for each choice of $p$ modulo $12$.  

\begin{table}[h]
\caption{Factors arising from elliptic points}
\label{table}
\centering
\[\begin{array}{|c|cccc|}
\hline
p\mod{12}&x^{\epsilon_p(\rho)}&(x-1728)^{\epsilon_p(i)}&\mathcal{G}_p(x)&S_p^{(l)}(x)\\
\hline
1&x^{\lfloor\frac{2(g^2+g)}{3}\rfloor}&(x-1728)^{(g^2+g)/2}&1&\widetilde{S}_p^{(l)}(x)\\
5&1&(x-1728)^{(g^2+g)/2}&x^{\lceil\frac{g^2-g}{3}\rceil}&x\cdot\widetilde{S}_p^{(l)}(x)\\
7&x^{\lfloor\frac{2(g^2+g)}{3}\rfloor}&1&(x-1728)^{(g^2-g)/2}&(x-1728)\cdot\widetilde{S}_p^{(l)}(x)\\
11&1&1&x^{\lceil\frac{g^2-g}{3}\rceil}(x-1728)^{(g^2-g)/2}&x(x-1728)\cdot\widetilde{S}_p^{(l)}(x)\\
\hline
\end{array}\]

\end{table}
\bsk

Since both $\lceil\frac{g^2-g}{3}\rceil$ and $\frac{g^2-g}{2}$ are less than $g^2+g$, we see from Table \ref{table} that $\mathcal{G}_p(x)$ always divides $S_p^{(l)}(x)^{g^2+g}$. Then since $x$ and $(x-1728)$ are coprime to $\widetilde{S}_p(x)$ we have
\begin{equation}\label{firstcancel}\mathcal{F}_p(x)\frac{S_p^{(l)}(x)^{g^2+g}}{\mathcal{G}_p(x)}\equiv \widetilde{S}_p(x)^{g^2-g}\frac{\widetilde{F}(W^2,x)}{x^{\epsilon_p(\rho)}(x-1728)^{\epsilon_p(i)}}\pmod{p},\end{equation}
where the two quotients reduce to polynomials.

Now on the left, we write $S_p^{(l)}(x)=x^{\alpha_p(\rho)}(x-1728)^{\alpha_p(i)}\widetilde{S}_p^{(l)}(x)$ with $\alpha_p(\rho),\alpha_p(i)\in\{0,1\}$ according to $p$ modulo 12, as in Table \ref{table}. On the right, we write $\widetilde{S}_p(x)=\widetilde{S}_p^{(l)}(x)S^{(q)}(x)$. Then (\ref{firstcancel}) becomes

\begin{equation}\label{secondcancel}\mathcal{F}_p(x)\widetilde{S}_p^{(l)}(x)^{g^2+g}\frac{(x^{\alpha_p(\rho)}(x-1728)^{\alpha_p(i)})^{g^2+g}}{\mathcal{G}_p(x)}\equiv \widetilde{S}^{(l)}_p(x)^{g^2-g}S_p^{(q)}(x)^{g^2-g}\frac{\widetilde{F}(W^2,x)}{x^{\epsilon_p(\rho)}(x-1728)^{\epsilon_p(i)}}\pmod{p}.\end{equation}

Now the quotient on the left of (\ref{secondcancel}) must divide $\widetilde{F}(W^2,x)$. Then cancelling $\widetilde{S}^{(l)}_p(x)^{g^2-g}$ on each side leaves $\widetilde{S}_p^{(l)}(x)^{2g}$ on the left, which  must then divide $\widetilde{F}(W^2,x)$ as well. So (\ref{secondcancel}) becomes

\[\mathcal{F}_p(x)\equiv S_p^{(q)}(x)^{g^2-g}H_1(x)\pmod{p},\]

where $H_1(x)$ is the polynomial given in non-reduced form by the quotient

\[H_1(x):=\frac{\mathcal{G}_p(x)\widetilde{F}(W^2,x)}{x^{\epsilon_p(\rho)}(x-1728)^{\epsilon_p(i)}(x^{\alpha_p(\rho)}(x-1728)^{\alpha_p(i)})^{g^2+g}\widetilde{S}_p^{(l)}(x)^{2g}}.\]

It remains to show that $H_1(x)$ is a perfect square. By Lemma \ref{squaredivisor}, we write $\widetilde{F}(W^2,x)=x^{\delta_p(\rho)}(x-1728)^{\delta_p(i)}\widetilde{F}(W,x)^2$, where $\delta_p(\rho),\delta_p(i)\in\{0,1\}$ according to $g(g+p)$ modulo 12. We then decompose $H_1(x)$ into a product of two quotients,

\[H_1(x)=\frac{\mathcal{G}_p(x)x^{\delta_p(\rho)}(x-1728)^{\delta_p(i)}}{x^{\epsilon_p(\rho)}(x-1728)^{\epsilon_p(i)}}\cdot \frac{\widetilde{F}(W,x)^2}{(x^{\alpha_p(\rho)}(x-1728)^{\alpha_p(i)})^{g^2+g}\widetilde{S}_p^{(l)}(x)^{2g}}.\]
Note that the exponents in the right-hand quotient are all even. The quotient on the left is of the form $x^a(x-1728)^b$, where $a$ and $b$ are integers, possibly negative. It is sufficient to show that $a$ and $b$ are both even. An examination of the exponents reveals that the parity of $a$ and $b$ depend only on $p$ and $g$ modulo 12. A check of all possible combinations of these values using Table \ref{table} and Lemma \ref{squaredivisor} confirms that $a$ and $b$ are indeed even in all cases, and therefore we can write $H_1(x)=H(x)^2$ for some polynomial $H(x)\in\mathbb{F}_p$. This concludes the proof of Theorem \ref{main}. \qed

\section{The example for $X_0^+(67)$}\label{exsec}

Here we compute $\mathcal{F}_{67}(x)$, the divisor polynomial corresponding to the modular curve $X_0^+(67)$, which has genus 2.   A basis for $S_2^+(67)$ is given by $\{f_1,f_2\}$, with
\[f_1=q - 3q^3 - 3q^4 - 3q^5 + q^6 + 4q^7 + 3q^8 + \cdots,\]
and
\[f_2=q^2 - q^3 - 3q^4 + 3q^7 + 4q^8 + \cdots.\]
The associated Wronskian is  
\[\mathcal{W}_{67}(z)=q^3 - 2q^4 - 6q^5 + 6q^6 + 15q^7 + 8q^8 +\cdots\in S_6(67).\]
Then by Lemma \ref{wpsquared} and (\ref{ftilde}), we have
\[\widetilde{F}(\widetilde{\mathcal{W}}_{67},x)\equiv x^4(x+1)^6(x+14)^6(x^2+8x+45)^2(x^2+44x+24)^2(x^2+10x+62)^2\pmod{67}.\]
But $\epsilon_{67}(i)=0$, $\epsilon_{67}(\rho)=4$, and
\[S_{67}(x)=(x+1)(x+14)(x^2+8x+45)(x^2+44x+24).\]
Therefore by Theorem \ref{ftildesplit} we have
\begin{align*}
\mathcal{F}_{67}(x)
&\equiv (x^2+8x+45)^2(x^2+44x+24)^2(x^2+10x+62)^2\pmod{67}\\
&\equiv S_{67}^{(q)}(x)^2(x^2+10x+62)^2\pmod{67}.
\end{align*}

\begin{note} In general, $H(x)$ may not be irreducible.\end{note}

\section{Acknowledgements}

The author gratefully acknowledges Scott Ahlgren for his invaluable mentoring and for suggesting this problem in the first place.


\begin{thebibliography}{10}



\bibitem{A}
S. Ahlgren, The arithmetic of {W}eierstrass points on modular curves $X_0(p)$, 3--12, in \textit{Galois Theory and Modular Forms}, Dev. Math., 11, Kluwer Acad. Publ., Boston, MA, 2004.

\bibitem{AMR}
S. Ahlgren, N. Masri\ and\ J. Rouse, Vanishing of modular forms at infinity,  \textit{Proc. Amer. Math. Soc.} {\bf 137} (2009), no. 4, 1205-–1214.

\bibitem{AO}
S. Ahlgren\ and\ K. Ono, Weierstrass points on $X_0(p)$ and supersingular $j$-invariants, \textit{Math. Ann.}  {\bf 325} (2003), 355--368.

\bibitem{AP}
S. Ahlgren\ and\ M. Papanikolas, Higher {W}eierstrass points on $X_0(p)$, \textit{Trans. Amer. Math. Soc.} {\bf 355} (2003), 1521--1535.

\bibitem{Atk}
A. O. L. Atkin, Weierstrass points at cusps of $\G_0(n)$, \textit{Ann. of Math.} {\bf 85} (1967), 42--45.

\bibitem{Atk2}
A. O. L. Atkin, Modular forms of weight 1 and supersingular equations, in \textit{U.S.--Japan seminar on applications of automorphic forms to number theory}, Ann Arbor, June 1975.

\bibitem{DR}
P. Deligne\ and\ M. Rapoport, Les sch\'{e}mas de modules de courbes elliptiques, in \textit{Modular functions of one variable, II, (Proc. Internat. Summer School, Univ. Antwerp, Antwerp, 1972)}, 143–-316, Lecture Notes in Math., Vol. 349, Springer, Berlin, 1973. 

\bibitem{ElG}
A. El-Guindy, Weierstrass points on $X_0(pM)$ and supersingular $j$-invariants, \textit{J. London Math. Soc.} (2) {\bf 70} (2004), no. 1, 1--22.

\bibitem{FK}
H. M. Farkas\ and\ I. Kra, \textit{Riemann surfaces}, Springer-Verlag, New York, 1992.

\bibitem{IJK}
B.--H. Im, D. Jeon\ and\ C. H. Kim, Notes on {W}eierstrass points of modular curves $X_0(N)$, \textit{Taiwanese J. Math.}, {\bf 20} (2016), no. 6, 1275--1293.

\bibitem{Kan}
M. Kaneko, Supersingular $j$-invariants as singular moduli mod $p$, \textit{Osaka J. Math.} {\bf 26} (1989), 849--855.

\bibitem{K1}
W. Kohnen, A short remark on {W}eierstrass points at infinity on $X_0(N)$, \textit{Monatsh. Math.} {\bf 143} (2004), no. 2, 163-–167. 

\bibitem{K2}
W. Kohnen, Weierstrass points at cusps on special modular curves, \textit{Abh. Math. Sem. Univ. Hamburg} {\bf 73} (2003), 241-–251. 

\bibitem{LeN}
J. Lehner\ and\ M. Newman, Weierstrass points of $\G_0(n)$, \textit{Ann. of Math.} {\bf 79} (1964), 360--368.

\bibitem{Mir}
R. Miranda, \textit{Algebraic curves and {R}iemann surfaces}, Graduate Studies in Mathematics {\bf 5}, Amer. Math. Soc., Providence, 1995.

\bibitem{OggHyp}
A. Ogg, Hyperelliptic modular curves, \textit{Bull. Soc. Math. France} {\bf 102} (1974), 449--462.

\bibitem{OggSC}
A. Ogg, Modular functions, in \textit{The Santa Cruz Conference on Finite Groups (Univ. California, Santa Cruz, Calif., 1979)}, pp. 521–-532, Proc. Sympos. Pure Math., 37, Amer. Math. Soc., Providence, R.I., 1980.

\bibitem{OggRed}
A. Ogg, On the reduction modulo $p$ of $X_0(pM)$, in \textit{U.S.--Japan seminar on applications of automorphic forms to number theory}, Ann Arbor, June 1975.

\bibitem{Ogg}
A. Ogg, On the {W}eierstrass points of $X_0(N)$, \textit{Illinois J. Math.}, {\bf 22} (1978), 31--35.

\bibitem{Ono}
K. Ono, \textit{The Web of Modularity: Arithmetic of the coefficients of modular forms and $q$-series}, CBMS Regional Conference Series in Mathematics, 102, published for the Conference Board of the Mathematical Sciences, Washington, DC, by the American Mathematical Society, Providence, RI, 2004.

\bibitem{Roh2}
D. E. Rohrlich, Some remarks on {W}eierstrass points, in \textit{Number theory related to Fermat's last theorem (Cambridge, Mass., 1981)}, pp. 71–-78, Progr. Math., 26, Birkhäuser, Boston, Mass., 1982.

\bibitem{Roh}
D. E. Rohrlich, Weierstrass points and modular forms, \textit{Illinois J. Math.} {\bf 29} (1985), 134--141.

\bibitem{SerreHPF}
J.-P. Serre, Congruences et formes modulaires (d'apr\`{e}s H. P. F. Swinnerton-Dyer), \textit{Sém. Bourbaki}, {\bf 416} (1971--1972), 74--88. 

\bibitem{Serre}
J.-P. Serre, Formes modulaires et fonctions z\^{e}ta $p$-adiques, in \textit{Modular functions of one variable, III (Proc. Internat. Summer School, Univ. Antwerp, 1972)}, pp. 191–-268. Lecture Notes in Math., Vol. 350, Springer, Berlin, 1973.

\bibitem{Stein}
W. Stein, Weierstrass points on $X_0(p)^+$, http://wstein.org/Tables/weierstrass\_point\_plus. Accessed 13 February 2017.

\bibitem{Sturm}
J. Sturm, On the congruence of modular forms, in \textit{Number Theory (New York, 1984--85)}, 275–-280,
Lecture Notes in Math., 1240, Springer, Berlin, 1987. 
\end{thebibliography}
\end{document}